\documentclass[review]{elsarticle} 
\usepackage[usenames]{color}
\usepackage{extarrows}
\usepackage{lineno,hyperref}
\modulolinenumbers[5]

\journal{..}
\usepackage{geometry}
\geometry{a4paper,scale=0.65}

\usepackage{amsfonts}
\usepackage{amsmath, cases}
\usepackage{indentfirst}
\usepackage{graphicx}
\usepackage{latexsym,bm, amsthm}

\usepackage{epsfig}
\usepackage{latexsym}
\usepackage{amssymb}
\linespread{1.37}
\allowdisplaybreaks[2]

\bibliographystyle{elsarticle-num}

\begin{document} 

\begin{frontmatter}

\title{Dual $r$-Rank Decomposition and Its Applications}

\author{Hongxing Wang}
\ead{winghongxing0902@163.com}

\author{Chong Cui}
\ead{cuichong0307@126.com}

\author{Xiaoji Liu$^{*}$}
\ead{xiaojiliu72@126.com}
\cortext[mycorrespondingauthor]{Corresponding author}
\address{School of Mathematics and Physics,
Guangxi Minzu University,
Nanning 530006, China}

\begin{abstract}
In this paper, we  introduce  the  dual $r$-rank decomposition of dual matrix, get its existence conditions and equivalent forms of the decomposition. Then we derive some characterizations of  dual Moore-Penrose generalized inverse(DMPGI). Based on DMPGI, we introduce  one special dual matrix(dual EP matrix). By applying the  dual $r$-rank decomposition, we derive several  characterizations of  dual EP matrix,  dual idempotent matrix, dual generalized inverses,  and relationships among dual Penrose equations.
\end{abstract}

\begin{keyword}
Dual matrix;
dual EP matrix;
dual $r$-rank decomposition;
dual Moore-Penrose generalized inverse;
dual Penrose equations
 \MSC[2020]  15A10 15B33
\end{keyword}

\end{frontmatter}

\linenumbers

\section{Introduction}
\numberwithin{equation}{section}
\newtheorem{theorem}{T{\scriptsize HEOREM}}[section]
\newtheorem{lemma}[theorem]{L{\scriptsize  EMMA}}
\newtheorem{corollary}[theorem]{C{\scriptsize OROLLARY}}
\newtheorem{proposition}[theorem]{P{\scriptsize ROPOSITION}}
\newtheorem{remark}{R{\scriptsize  EMARK}}[section]
\newtheorem{definition}{D{\scriptsize  EFINITION}}[section]
\newtheorem{algorithm}{A{\scriptsize  LGORITHM}}[section]
\newtheorem{example}{E{\scriptsize  XAMPLE}}[section]
\newtheorem{problem}{P{\scriptsize  ROBLEM}}[section]
\newtheorem{assumption}{A{\scriptsize  SSUMPTION}}[section]

Clifford firstly proposed the dual number {\cite{Clifford1873}} in 1873,
then  Study  \cite{Study1891}  gave its specific form.
Subsequently,
the dual algebra has developed rapidly and been widely applied to
dynamic analysis of spatial mechanisms, sensor calibration, robotics and other fields (see {\cite{Falco2018,Pennestr2018,Udwadia2020,Yang2010}}).
In recent years, some researches of dual matrix,
dual  generalized inverse,
 dual equation and their applications
 have further promoted the development of dual algebra theory and its applications (see {\cite{Angeles2012, Belzile2019, Condurache2012,Falco2018,Gutin2021lma,Pennestr2009}}).

In this paper, we adopt the following notations:
$\mathbb{R}_{m\times n}$ stands for the set of all $m\times n$ real matrices;
 ${\rm rk}(A)$ for the rank of $A$;
$Q_{A,B}^{S}$ for $A^{T}B+B^{T}A$.
Let the dual number be $\widehat{a}$ and have the following form:
$$\widehat{a}=a+\epsilon a^{\circ}$$
in which $a$ and $a^{\circ}$ are real numbers,
and $\epsilon$ is the dual unit subjected to the rules
$$\epsilon \neq 0, \ 0\epsilon=\epsilon 0=0,  \
1\epsilon=\epsilon 1=\epsilon \mbox{\ \  and \ } \epsilon^{2}=0.$$
If a matrix has the form of $A_0+\epsilon A_1$ and
$A_i\in\mathbb{R}_{m\times n}(i=0,1)$,
it can be called the dual matrix and denoted as $\widehat{A}$.
Furthermore,
denote the set of all $m\times n$ dual matrices as $\mathbb{D}_{m\times n}$.

The dual Moore-Penrose generalized inverse(DMPGI for short) of $\widehat{A}$ is the unique dual matrix $\widehat{X}$,
which satisfies  the following four dual Penrose equations {\cite{Pennestr2009}}:
\begin{align}
\label{20220216-8}
\left(\widehat{1}\right)\ \widehat{A}\widehat{X}\widehat{A}=\widehat{A},
\left(\widehat{2}\right)\ \widehat{X}\widehat{A}\widehat{X}=\widehat{A},
\left(\widehat{3}\right)\ \left(\widehat{A}\widehat{X}\right)^{T}=\widehat{A}\widehat{X},
\left(\widehat{4}\right)\ \left(\widehat{X}\widehat{A}\right)^{T}=\widehat{X}\widehat{A},
\end{align}
and the unique dual matrix $\widehat{X}$ is denoted by $\widehat{X}=\widehat{A}^{\dag}$.
Especially,
the DMPGI has expanded  the application range of the generalized inverse theory.
It is worth noting that,  unlike real matrix,
dual matrix may not have DMPGI.
When $A_1=0$,
 $\widehat{A}=A_0$ is a real matrix,
 then
the Moore-Penrose generalized inverse of $A_0$ is the unique matrix $X$ satisfying the following four Penrose equations:
$$(1)\ A_0XA_0=A_0, \ (2)\ XA_0X=X,
\ (3)\ (A_0X)^{T}=A_0X, \  (4)\ (XA_0)^{T}=XA_0$$
and the unique matrix $X$ is denoted by $X=A_0^{\dag}$.
 Let $A_0\{i,...,k\}$ denote the set of solutions
 which satisfy equations $(i),..., (k)$ from the above four Penrose equations (1)-(4).
Therefore  $X$ can be called $\{i,...,k\}$-inverse of $A_0$,
and   denoted by $A_0^{(i,...,k)}$(see {\cite{Adi2003}}).
It is well known that
a variety of   generalized inverses,
such as Drazin inverse,
group inverse,
core inverse and core-EP inverse,
 have been established successively.
The achievements of generalized inverse theory have been greatly enriched,
and the scope of their applications has been expanded to physics, statistics, etc.
For more information about generalized inverse theory and its applications,
please refer to  {\cite{Adi2003,Rao1973,Wang2018}}.

Full-rank decomposition is one of the basic decompositions in matrix theory.
It has the following definitions {\cite{Adi2003,Zhang1999}}:
Let $A\in\mathbb{R}_{m\times n}$ and
${\rm rk}(A)=r$,
then there exist full column rank matrix $F\in\mathbb{R}_{m\times r}$
and full row rank matrix $G\in\mathbb{R}_{r\times n}$
such that $A=FG$.
Not only does full rank decomposition play an important role in solving generalized inverse matrix,
but also has a wide range of applications in many fields such as   mathematical statistics, systems theory, optimization and cybernetics. 
For example,
the full rank decomposition can be used to represent the $\{i,...,k\}$-inverse of matrix $A$ {\cite{Adi2003}}:
Let $A\in \mathbb{R}_{m\times n}$, ${\rm rk}(A)=r$,
and its full rank decomposition is $A=FG$,  in which  ${\rm rk}(F)= {\rm rk}(G)=r$,
then
\begin{align}
\label{20220216-2}
A^\dag&=G^{\dag}F^{\dag},\
G^{\dag}  =G^{T}\left(GG^{T}\right)^{-1},\
F^{\dag} =\left(F^{T}F\right)^{-1}F^{T},
\\
\label{20220216-3}
G^{(i)}F^{(1)}& \in A{\{i\}},  i=1,2,4 \mbox{\ and \ } G^{(1)}F^{(j)}\in A{\{j\}},  j=1,2,3.
\end{align}
For more details, please refer to   literatures {\cite{Adi2003,Rao1973}}.

In this paper, we   extend the full rank decomposition from real matrix to dual matrix, 
introduce the  dual $r$-rank decomposition,
get some equivalent characterizations of the existence of dual $r$-rank decomposition,
and
give  a method of calculating dual $r$-rank decomposition.
By applying the
decomposition,
we get characterizations of DMPGI and  relationships among dual Penrose equations. 
Furthermore,
we give  a method of calculating DMPGI
and some examples.
In addition,
we consider two special dual matrices: dual EP matrix and dual idempotent matrix.
We give the definition of  dual EP matrix,
and
get characterizations and dual $r$-rank decompositions of  both dual EP matrix and dual idempotent matrix.
At last, by applying the dual $r$-rank decomposition and definitions  of these special dual matrix,
we get  characterizations of both DMPGIs of  dual EP matrix and dual idempotent matrix.

\section{Preliminaries}
This section provides several  results that will be used in the following sections.

\begin{lemma}[{\cite{Wang2021}}]
Let $\widehat{A}\in\mathbb{D}_{m\times n}$
and  $\widehat{A}= A_0+\epsilon A_1$.
Then
the DMPGI of $\widehat{A}$ exists if and only if
\begin{align}
\label{1.1}
\left(I_m-A_0A_0^{\dag}\right)A_1\left(I_n-A_0^{\dag}A_0\right)=0.
\end{align}
Furthermore,
\begin{align}
\nonumber
\widehat{A}^{\dag}
=
A_0^{\dag}
-\varepsilon
 \left(A_0^{\dag}A_1A_0^{\dag}\right.
 &
 -\left(A_0^{T}A_0\right)^{\dag}A_1^{T}\left(I_m-A_0A_0^{\dag}\right)
 \\
\label{The-DMPGI}
&
 -\left.\left(I_n-A_0^{\dag}A_0\right)A_1^{T}\left(A_0A_0^{T}\right)^{\dag}\right).
\end{align}
\end{lemma}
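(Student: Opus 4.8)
The plan is to prove the characterization by writing $\widehat{A}\widehat{X} = (A_0 + \epsilon A_1)(X_0 + \epsilon X_1)$ and similarly for the other products, expanding in powers of $\epsilon$ using $\epsilon^2 = 0$, and separating each dual Penrose equation into its real part and its $\epsilon$-part. Writing $\widehat{X} = X_0 + \epsilon X_1$, equation $(\widehat{1})$ becomes $A_0 X_0 A_0 = A_0$ at order $\epsilon^0$, which forces $X_0 \in A_0\{1\}$; pushing the same splitting through $(\widehat{2})$, $(\widehat{3})$, $(\widehat{4})$ at order $\epsilon^0$ shows that $X_0$ satisfies all four ordinary Penrose equations, hence $X_0 = A_0^\dag$ is forced. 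This reduces everything to identifying $X_1$.

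Next I would collect the $\epsilon$-order terms. With $X_0 = A_0^\dag$ fixed, the four equations give a linear system in $X_1$:
\begin{align*}
A_0 X_1 A_0 &= -A_1 A_0^\dag A_0 - A_0 A_0^\dag A_1, \\
X_1 A_0 A_0^\dag + A_0^\dag A_1 A_0^\dag + A_0^\dag A_0 X_1 &= X_1, \\
(A_0 X_1 + A_1 A_0^\dag)^T &= A_0 X_1 + A_1 A_0^\dag, \\
(X_1 A_0 + A_0^\dag A_1)^T &= X_1 A_0 + A_0^\dag A_1.
\end{align*}
The strategy is to show this system is consistent precisely when the obstruction \eqref{1.1} holds, and when it is consistent the unique solution is the bracketed expression in \eqref{The-DMPGI}. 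For the consistency direction, I would multiply the first equation on the left by $I_m - A_0 A_0^\dag$ and on the right by $I_n - A_0^\dag A_0$: the left side vanishes because $(I_m - A_0 A_0^\dag)A_0 = 0$, while the right side collapses to $-(I_m - A_0 A_0^\dag)A_1(I_n - A_0^\dag A_0)$, giving \eqref{1.1} as a necessary condition. Conversely, assuming \eqref{1.1}, I would verify by direct substitution that
\[
X_1 = -\left(A_0^\dag A_1 A_0^\dag - (A_0^T A_0)^\dag A_1^T (I_m - A_0 A_0^\dag) - (I_n - A_0^\dag A_0) A_1^T (A_0 A_0^T)^\dag\right)
\]
satisfies all four $\epsilon$-order equations, using the standard identities $A_0^\dag = (A_0^T A_0)^\dag A_0^T = A_0^T (A_0 A_0^T)^\dag$, $A_0 A_0^\dag$ and $A_0^\dag A_0$ symmetric idempotents, $(I_m - A_0 A_0^\dag)A_0 = 0$, $A_0(I_n - A_0^\dag A_0) = 0$, and the range/null-space compatibility coming from \eqref{1.1}.

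The main obstacle I anticipate is the bookkeeping in the converse direction: one must check that the proposed $X_1$ simultaneously satisfies all four equations, and in particular that the three summands land in the correct range/corange pieces so that cross terms cancel. The cleanest way to manage this is to decompose $X_1$ into its "core" part $-A_0^\dag A_1 A_0^\dag$ (which lives in the range/corange of $A_0^\dag$) plus two "correction" parts supported on the complementary subspaces, then verify each Penrose equation piece by piece against the orthogonal decomposition $\mathbb{R}^m = \mathrm{ran}(A_0) \oplus \mathrm{ran}(A_0)^\perp$ and $\mathbb{R}^n = \mathrm{ran}(A_0^T) \oplus \mathrm{ran}(A_0^T)^\perp$; condition \eqref{1.1} is exactly what guarantees the "off-diagonal" block of $A_1$ (the part mapping $\mathrm{ran}(A_0)^\perp$-complement to $\mathrm{ran}(A_0)^\perp$) is zero, which is the obstruction to solvability. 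Uniqueness of $\widehat{X}$ then follows from uniqueness of $X_0 = A_0^\dag$ together with the fact that the homogeneous version of the four $\epsilon$-order equations has only the trivial solution — the same argument that proves uniqueness of the classical Moore-Penrose inverse, applied to the linear map $X_1 \mapsto (A_0 X_1 A_0, \ldots)$.
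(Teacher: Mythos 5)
This lemma is quoted in the paper from \cite{Wang2021} and is not proved there, so there is no in-paper argument to compare against; I will assess your plan on its own terms. Your route --- split $\widehat{X}=X_0+\epsilon X_1$, observe that the order-$\epsilon^0$ parts of the four dual Penrose equations force $X_0=A_0^{\dag}$, and then treat the order-$\epsilon^1$ parts as an affine linear system in $X_1$ whose consistency is equivalent to (\ref{1.1}) and whose solution is the bracketed term in (\ref{The-DMPGI}) --- is the natural and standard way to establish this result, and it does go through: the explicit $X_1$ you propose satisfies all four first-order equations (the verification uses exactly the identities you list, plus the fact that (\ref{1.1}) rewrites $A_1-A_1A_0^{\dag}A_0-A_0A_0^{\dag}A_1$ as $-A_0A_0^{\dag}A_1A_0^{\dag}A_0$), and your uniqueness argument is sound because the homogeneous system $A_0DA_0=0$, $D=DA_0A_0^{\dag}+A_0^{\dag}A_0D$, $(A_0D)^T=A_0D$, $(DA_0)^T=DA_0$ forces $A_0D=DA_0=0$ and hence $D=0$.

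One concrete slip to fix: your first $\epsilon$-order equation is missing a term. Expanding $\widehat{A}\widehat{X}\widehat{A}=\widehat{A}$ at order $\epsilon$ gives $A_1X_0A_0+A_0X_1A_0+A_0X_0A_1=A_1$, i.e.
\begin{align*}
A_0X_1A_0=A_1-A_1A_0^{\dag}A_0-A_0A_0^{\dag}A_1,
\end{align*}
whereas you wrote only $-A_1A_0^{\dag}A_0-A_0A_0^{\dag}A_1$ on the right. With your version, sandwiching between $I_m-A_0A_0^{\dag}$ and $I_n-A_0^{\dag}A_0$ annihilates \emph{both} sides, so the necessary condition (\ref{1.1}) would not emerge; it is precisely the dropped $A_1$ that survives the sandwiching and produces $\left(I_m-A_0A_0^{\dag}\right)A_1\left(I_n-A_0^{\dag}A_0\right)=0$. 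Your stated conclusion in the next sentence is the correct one, so this is a transcription error rather than a conceptual gap, but the displayed system as written is wrong and the subsequent verification of the candidate $X_1$ must be run against the corrected equation.
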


\begin{lemma}[{\cite{Udwadia2021}}]
\label{2.5}
Let
$\widehat{A_1}\in\mathbb{D}_{m\times r}$,
$\widehat{A_2}\in\mathbb{D}_{r\times n}$,
$\widehat{A_1}= A_2 + \epsilon A_3$,
$\widehat{A_2}= A_4 + \epsilon A_5$,
${\rm rk}(A_2)=r$ and ${\rm rk}(A_4)=r$.
Then
\begin{align}
\label{2.20-1}
\widehat{A_1}^{\dag}&=\left(\widehat{A_1}^{T}\widehat{A_1}\right)^{-1}\widehat{A_1}^{T}
\\
\label{2.20}
&=\left(A_2^{T}A_2\right)^{-1}A_2^{T}+\epsilon
\left(\left(A_2^{T}A_2\right)^{-1}A_3^{T}-\left(A_2^{T}A_2\right)^{-1}Q_{A_2,A_3}^{S}\left(A_2^{T}A_2\right)^{-1}A_2^{T}\right)
\end{align}
and
\begin{align}
\label{2.20-2}
\widehat{A_2}^{\dag}&=\widehat{A_2}^{T}\left(\widehat{A_2}\widehat{A_2}^{T}\right)^{-1}
\\
\label{2.19}
&=A_4^{T}\left(A_4A_4^{T}\right)^{-1}+\epsilon
\left(A_5^{T}\left(A_4A_4^{T}\right)^{-1}-A_4^{T}\left(A_4A_4^{T}\right)^{-1}Q_{A_4^{T},A_5^{T}}^{S}\left(A_4A_4^{T}\right)^{-1}\right),
\end{align}
where $Q_{A_2,A_3}^{S}=A_2^{T}A_3+A_3^{T}A_2$
and $Q_{A_4^{T},A_5^{T}}^{S}=A_4A_5^{T}+A_5A_4^{T}$ .
\end{lemma}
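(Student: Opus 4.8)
The plan is to produce the candidate $\widehat{X}=\left(\widehat{A_1}^{T}\widehat{A_1}\right)^{-1}\widehat{A_1}^{T}$, check it makes sense, verify directly that it satisfies the four dual Penrose equations \eqref{20220216-8}, and then expand it into real and $\epsilon$-parts to obtain \eqref{2.20}; the statements about $\widehat{A_2}$ will follow from the first half by transposition. The preliminary point is that $\widehat{A_1}^{T}\widehat{A_1}$ is an invertible dual matrix. A dual matrix $M_0+\epsilon M_1$ with $M_0$ invertible is invertible, with inverse $M_0^{-1}-\epsilon M_0^{-1}M_1M_0^{-1}$ (multiply out and use $\epsilon^{2}=0$). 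Since $\widehat{A_1}^{T}\widehat{A_1}=A_2^{T}A_2+\epsilon\left(A_2^{T}A_3+A_3^{T}A_2\right)=A_2^{T}A_2+\epsilon Q_{A_2,A_3}^{S}$ and ${\rm rk}(A_2)=r$ forces $A_2^{T}A_2$ to be invertible, $\widehat{A_1}^{T}\widehat{A_1}$ is invertible and \eqref{2.20-1} is well posed.

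Next I would verify $\left(\widehat{1}\right)$–$\left(\widehat{4}\right)$ for $\widehat{X}$. Matrix arithmetic over the (commutative) ring of dual numbers, together with the involutive order-reversing transpose on $\mathbb{D}_{m\times n}$, obeys the same formal rules as over $\mathbb{R}_{m\times n}$, so the classical computation for a full column rank real matrix transfers verbatim: $\widehat{A_1}\widehat{X}\widehat{A_1}=\widehat{A_1}\left(\widehat{A_1}^{T}\widehat{A_1}\right)^{-1}\widehat{A_1}^{T}\widehat{A_1}=\widehat{A_1}$ and, cancelling once more, $\widehat{X}\widehat{A_1}\widehat{X}=\widehat{X}$; moreover $\widehat{X}\widehat{A_1}=\left(\widehat{A_1}^{T}\widehat{A_1}\right)^{-1}\widehat{A_1}^{T}\widehat{A_1}=I_r$ is symmetric, while $\left(\widehat{A_1}\widehat{X}\right)^{T}=\widehat{A_1}\left(\left(\widehat{A_1}^{T}\widehat{A_1}\right)^{T}\right)^{-1}\widehat{A_1}^{T}=\widehat{A_1}\left(\widehat{A_1}^{T}\widehat{A_1}\right)^{-1}\widehat{A_1}^{T}=\widehat{A_1}\widehat{X}$, where I use $\left(\widehat{M}^{-1}\right)^{T}=\left(\widehat{M}^{T}\right)^{-1}$ (a consequence of transposition being an anti-automorphism) and $\left(\widehat{A_1}^{T}\widehat{A_1}\right)^{T}=\widehat{A_1}^{T}\widehat{A_1}$. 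By uniqueness of the DMPGI this establishes \eqref{2.20-1}. Substituting $\left(\widehat{A_1}^{T}\widehat{A_1}\right)^{-1}=\left(A_2^{T}A_2\right)^{-1}-\epsilon\left(A_2^{T}A_2\right)^{-1}Q_{A_2,A_3}^{S}\left(A_2^{T}A_2\right)^{-1}$, multiplying on the right by $\widehat{A_1}^{T}=A_2^{T}+\epsilon A_3^{T}$, and collecting the $\epsilon^{0}$ and $\epsilon^{1}$ terms (again $\epsilon^{2}=0$) then yields exactly \eqref{2.20}.

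For $\widehat{A_2}$ I would note that transposing each of the four equations in \eqref{20220216-8} shows $\left(\widehat{B}^{\dag}\right)^{T}=\left(\widehat{B}^{T}\right)^{\dag}$ for every dual matrix $\widehat{B}$ whose DMPGI exists. Applying the already-proved first half to $\widehat{A_2}^{T}=A_4^{T}+\epsilon A_5^{T}$, whose real part $A_4^{T}$ has full column rank $r$ because ${\rm rk}(A_4)=r$, and transposing back gives $\widehat{A_2}^{\dag}=\widehat{A_2}^{T}\left(\widehat{A_2}\widehat{A_2}^{T}\right)^{-1}$, i.e. \eqref{2.20-2}, with $\widehat{A_2}\widehat{A_2}^{T}=A_4A_4^{T}+\epsilon Q_{A_4^{T},A_5^{T}}^{S}$ invertible; the same $\epsilon$-expansion as before produces \eqref{2.19}. (Alternatively, one can observe that ${\rm rk}(A_2)=r$ makes the existence condition $\left(I_m-A_0A_0^{\dag}\right)A_1\left(I_n-A_0^{\dag}A_0\right)=0$ of the first preliminary lemma automatic, since $A_2^{\dag}A_2=I_r$, and then specialize formula \eqref{The-DMPGI}.) There is no genuine obstacle here; the only thing requiring care is the bookkeeping — keeping the non-commutativity of matrix products straight while exploiting the commutativity and nilpotency of $\epsilon$, and checking that the real-matrix identity $A^{\dag}=\left(A^{T}A\right)^{-1}A^{T}$ really does lift, which it does because its proof never leaves the shared ring-with-transpose structure of $\mathbb{R}_{m\times n}$ and $\mathbb{D}_{m\times n}$.
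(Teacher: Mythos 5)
Your argument is correct. Note that the paper does not actually prove Lemma \ref{2.5}: it is imported verbatim from the cited reference (Udwadia, 2021) and used as a black box, so there is no in-paper proof to compare against. Your verification is the natural one and is self-contained: you correctly identify the one point that does not transfer automatically from the real case, namely that $\widehat{A_1}^{T}\widehat{A_1}$ must be shown invertible over the dual numbers (which you get from the invertibility of its real part $A_2^{T}A_2$ together with $\epsilon^{2}=0$), after which the four dual Penrose equations \eqref{20220216-8} follow by the same formal manipulations as for real full-column-rank matrices, and the $\epsilon$-expansion of $\left(A_2^{T}A_2+\epsilon Q_{A_2,A_3}^{S}\right)^{-1}\left(A_2^{T}+\epsilon A_3^{T}\right)$ gives \eqref{2.20} exactly. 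The transposition argument reducing \eqref{2.20-2}--\eqref{2.19} to the first half is also sound. Your method is moreover consistent with how the paper itself argues in the proof of Theorem \ref{2.6}, where the four dual Penrose equations are checked directly for a candidate expression and uniqueness of the DMPGI is invoked. One incidental point worth flagging: the paper's displayed equation $\left(\widehat{2}\right)$ in \eqref{20220216-8} contains a typo ($\widehat{X}\widehat{A}\widehat{X}=\widehat{A}$ should read $\widehat{X}\widehat{A}\widehat{X}=\widehat{X}$); you correctly verify the intended equation.
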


\begin{lemma}[{\cite{Liu2006}}]
\label{2.1}
Let
 $A\in\mathbb{R}_{m\times p}$,
$B\in\mathbb{R}_{q\times n}$
and $ C\in\mathbb{R}_{m\times n}$.
Then the matrix equation
\begin{align}
\label{2.11}
AX+YB=C
\end{align}
is consistent if and only if
\begin{align}
\label{2.12}
\left(I_{m}-AA^{\dag}\right)C\left(I_{n}-B^{\dag}B\right)=0.
\end{align}
then
  the solution of this equation is
\begin{subnumcases}{}
\label{2.13}
X=A^\dag C+UB+\left(I_p-A^\dag A\right)V,
\\
\label{2.14}
Y=\left(I_m-AA^{\dag}\right)CB^{\dag}-AU+W\left(I_q-BB^{\dag}\right),
\end{subnumcases}
where $U\in \mathbb{R}_{p\times q}$, $V\in\mathbb{R}_{p\times n}$ and $W\in\mathbb{R}_{m\times q}$ are arbitrary.
\end{lemma}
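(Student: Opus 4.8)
The plan is to argue everything by direct manipulation with the orthogonal projectors $AA^{\dag}$, $A^{\dag}A$, $BB^{\dag}$, $B^{\dag}B$, using only the identities $AA^{\dag}A=A$ and $BB^{\dag}B=B$. For the necessity of \eqref{2.12}: if $X,Y$ satisfy \eqref{2.11}, I would left-multiply by $I_m-AA^{\dag}$ and right-multiply by $I_n-B^{\dag}B$; since $(I_m-AA^{\dag})A=0$ and $B(I_n-B^{\dag}B)=0$, both terms on the left vanish and $(I_m-AA^{\dag})C(I_n-B^{\dag}B)=0$ drops out at once.

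For sufficiency, assume \eqref{2.12}. First I would produce the particular solution $X_0=A^{\dag}C$, $Y_0=(I_m-AA^{\dag})CB^{\dag}$: then $AX_0+Y_0B=AA^{\dag}C+(I_m-AA^{\dag})CB^{\dag}B$, and writing $CB^{\dag}B=C-C(I_n-B^{\dag}B)$ and invoking \eqref{2.12} collapses this to $AA^{\dag}C+(I_m-AA^{\dag})C=C$. Next I would check that adding $UB+(I_p-A^{\dag}A)V$ to $X_0$ and $-AU+W(I_q-BB^{\dag})$ to $Y_0$ still yields a solution, since the cross terms $AUB$ and $-AUB$ cancel while $A(I_p-A^{\dag}A)V=0$ and $W(I_q-BB^{\dag})B=0$. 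This settles consistency and shows that the displayed pairs are indeed solutions.

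The main effort is the converse: every solution of \eqref{2.11} has the displayed form. Given such $X,Y$, set $X'=X-X_0$ and $Y'=Y-Y_0$, so that $AX'+Y'B=0$, that is, $AX'=-Y'B$. Left-multiplying by $A^{\dag}$ gives $A^{\dag}AX'=-A^{\dag}Y'B$, whence $X'=(-A^{\dag}Y')B+(I_p-A^{\dag}A)X'$, which matches \eqref{2.13} with $U=-A^{\dag}Y'$ and $V=X'$. For $Y'$, from $AX'=-Y'B$ I would deduce $(I_m-AA^{\dag})Y'B=-(I_m-AA^{\dag})AX'=0$, so right-multiplying by $B^{\dag}$ gives $(I_m-AA^{\dag})Y'=(I_m-AA^{\dag})Y'(I_q-BB^{\dag})$; then $Y'=AA^{\dag}Y'+(I_m-AA^{\dag})Y'=-AU+W(I_q-BB^{\dag})$ with the same $U=-A^{\dag}Y'$ and $W=(I_m-AA^{\dag})Y'$, matching \eqref{2.14}. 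The one delicate point is that the parameter $U$ in \eqref{2.13} and \eqref{2.14} must be one and the same; the choice $U=-A^{\dag}Y'$ achieves this, so no genuine obstacle remains — the whole argument runs on elementary projector identities.
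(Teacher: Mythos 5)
Your proof is correct and complete: the necessity step via the two annihilating projectors, the explicit particular solution $X_0=A^{\dag}C$, $Y_0=(I_m-AA^{\dag})CB^{\dag}$, the check that the parametrized pairs all solve the equation, and the converse showing every solution has the stated form with the single consistent choice $U=-A^{\dag}Y'$, $V=X'$, $W=(I_m-AA^{\dag})Y'$ (whose dimensions match $\mathbb{R}_{p\times q}$, $\mathbb{R}_{p\times n}$, $\mathbb{R}_{m\times q}$) all check out. The paper itself offers no proof of this lemma --- it is quoted from the cited reference of Liu --- so there is nothing to compare against; your argument is the standard projector-identity derivation and in fact supplies the ``general solution'' claim more carefully than the paper's statement, which only asserts that the displayed pair is \emph{a} solution.
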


\section{Dual $r$-rank Decomposition}

In this section we extend the full rank decomposition of real matrix to dual matrix. 
We also give the definitions of $r$-row full rank dual matrix, $r$-column full rank    dual matrix,
and  dual $r$-rank decomposition.
Furthermore,
we give  characterizations of the existence of the dual $r$-rank decomposition, a  method of calculating the decomposition,
and two  examples.

\begin{definition}
Let
$\widehat{A_1}\in\mathbb{D}_{m\times r}$,
$\widehat{A_2}\in\mathbb{D}_{r\times n}$,
$\widehat{A_1}= A_2 + \epsilon A_3$
and
$\widehat{A_2}= A_4 + \epsilon A_5$.
If the real part matrix $A_2$ of $\widehat{A_1}$ is a column full rank matrix,
then we call  $\widehat{A_1}$  $r$-column full rank dual matrix;
if the real part matrix $A_4$ of $\widehat{A_2}$ is a row full rank  matrix,
then  we call  $\widehat{A_2}$   $r$-row full rank dual matrix.
\end{definition}

\begin{definition} {\bf (Dual $r$-rank Decomposition)}
\label{2.111}
Let
 $\widehat{A}\in\mathbb{D}_{m\times n}$,
 $\widehat{A}= A_0+\epsilon A_1$,
${\rm rk}(A_0)=r$,
and
$A_0=A_2A_4$ be a full rank decomposition of $A_0$.
If there exist
an $r$-column full rank  dual matrix $\widehat{A_1}= A_2+\epsilon A_3$
and
an $r$-row full rank dual  matrix $\widehat{A_2}= A_4+ \epsilon A_5$,
such that
$$\widehat{A}= \widehat{A_1}\widehat{A_2},$$
which we call a dual $r$-rank decomposition of $\widehat{A}$.
\end{definition}

From Definition \ref{2.111},
  the following results can be inferred.

\begin{theorem}
\label{2.2}
Let
 $\widehat{A}\in\mathbb{D}_{m\times n}$,
 $\widehat{A}= A_0+\epsilon A_1$,
${\rm rk}(A_0)=r$,
and $A_0=A_2A_4$ be a full rank decomposition of $A_0$.
Then the dual $r$-rank decomposition
 of $\widehat{A}$ exists if and only if
\begin{align}
\label{2.15}
\left(I_{m}-A_2A_2^{\dag}\right)A_1\left(I_{n}-A_4^{\dag}A_4\right)=0.
\end{align}
Furthermore,
if $\widehat{A}$ has a dual $r$-rank decomposition $\widehat{A}= \widehat{A_1}\widehat{A_2}$,
in which
$ \widehat{A_1}= A_2+\epsilon A_3$ and $ \widehat{A_2}= A_4+ \epsilon A_5$,
then
\begin{align}
\label{20210210-1}
\left\{
\begin{aligned}
A_3&=\left(I_m-A_2A_2^{\dag}\right)A_1A_4^{\dag}-A_2P,
\\
A_5&=A_2^{\dag}A_1+PA_4,
\end{aligned}
\right.
\end{align}
for arbitrary
 $P\in \mathbb{R}_{r\times r}$. 
\end{theorem}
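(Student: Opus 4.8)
The plan is to reduce the existence of a dual $r$-rank decomposition of $\widehat{A}$ to the solvability of a single real matrix equation, and then apply Lemma~\ref{2.1}. First I would write $\widehat{A_1}= A_2+\epsilon A_3$ and $\widehat{A_2}= A_4+\epsilon A_5$, where $A_3\in\mathbb{R}_{m\times r}$ and $A_5\in\mathbb{R}_{r\times n}$ are to be determined, expand
\begin{align*}
\widehat{A_1}\widehat{A_2}=A_2A_4+\epsilon\left(A_3A_4+A_2A_5\right),
\end{align*}
and compare with $\widehat{A}=A_0+\epsilon A_1=A_2A_4+\epsilon A_1$, using $A_0=A_2A_4$. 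The real parts agree automatically, so $\widehat{A}=\widehat{A_1}\widehat{A_2}$ holds if and only if
\begin{align*}
A_3A_4+A_2A_5=A_1.
\end{align*}
Since for every choice of $A_3$, $A_5$ the real part of $\widehat{A_1}$ is $A_2$ (column full rank) and the real part of $\widehat{A_2}$ is $A_4$ (row full rank), the requirements that $\widehat{A_1}$ be $r$-column full rank and $\widehat{A_2}$ be $r$-row full rank impose no extra condition. Hence a dual $r$-rank decomposition exists exactly when this equation is consistent in the unknowns $A_3$, $A_5$.

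Next I would invoke Lemma~\ref{2.1} with $A=A_2$, $B=A_4$, $C=A_1$ and unknowns $X=A_5$, $Y=A_3$ (so $p=q=r$). The consistency condition \eqref{2.12} becomes precisely \eqref{2.15}, which gives the stated equivalence. For the explicit formulas, I would start from the general solution \eqref{2.13}--\eqref{2.14},
\begin{align*}
A_5&=A_2^{\dag}A_1+UA_4+\left(I_r-A_2^{\dag}A_2\right)V,\\
A_3&=\left(I_m-A_2A_2^{\dag}\right)A_1A_4^{\dag}-A_2U+W\left(I_r-A_4A_4^{\dag}\right),
\end{align*}
with $U\in\mathbb{R}_{r\times r}$, $V\in\mathbb{R}_{r\times n}$, $W\in\mathbb{R}_{m\times r}$ arbitrary, and simplify: $A_2$ has full column rank, so $A_2^{\dag}A_2=I_r$, and $A_4$ has full row rank, so $A_4A_4^{\dag}=I_r$; the $V$- and $W$-terms vanish, leaving exactly \eqref{20210210-1} with $P:=U$ arbitrary in $\mathbb{R}_{r\times r}$.

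I do not expect a serious obstacle; once the reduction to $A_3A_4+A_2A_5=A_1$ is made, the rest is bookkeeping against Lemma~\ref{2.1}. The points needing a little care are: matching the two unknowns of Lemma~\ref{2.1} to $A_3$ and $A_5$ in the right order (which fixes the sign of the free parameter $P$), and checking that the full-rank clauses in Definition~\ref{2.111} add nothing beyond consistency of that equation. It is also worth remarking --- though not needed here, since a full rank decomposition $A_0=A_2A_4$ is fixed beforehand --- that $A_2A_2^{\dag}$ and $A_4^{\dag}A_4$ are the orthogonal projectors onto the column space and row space of $A_0$, so condition \eqref{2.15} does not depend on which full rank decomposition is chosen.
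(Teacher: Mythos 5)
Your proposal is correct and follows essentially the same route as the paper: reduce $\widehat{A}=\widehat{A_1}\widehat{A_2}$ to the consistency of the real equation $A_2A_5+A_3A_4=A_1$ and invoke Lemma~\ref{2.1}, with the $V$- and $W$-terms dropping out because $A_2^{\dag}A_2=I_r$ and $A_4A_4^{\dag}=I_r$. If anything, your write-up is slightly cleaner in the ``if'' direction, where the paper's argument contains a spurious intermediate claim before arriving at the same application of Lemma~\ref{2.1}.
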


\begin{proof}
$"\Rightarrow"$:
Suppose the dual $r$-rank decomposition of the dual matrix $\widehat{A}$ exists.
Let $\widehat{A}= \widehat{A_1}\widehat{A_2}$ be a dual $r$-rank decomposition of $\widehat{A}$,
where
$$\widehat{A_1}=A_2+\epsilon Y \mbox{\ \  and \ } \widehat{A_2}=A_4+ \epsilon X.$$
Then $A_0+\epsilon A_1=(A_2+\epsilon Y)(A_4+\epsilon X)$.
By expanding this equation, 
we have
\begin{align}
\label{2.16}
A_2X+YA_4=A_1.
\end{align}
By applying Lemma {\ref{2.1}} to the equation(\ref{2.16}),
we get   ({\ref{2.15}}).

$"\Leftarrow"$:
Let  $A_0=A_2A_4$ be a full rank decomposition of $A_0$.
Because ({\ref{2.15}}) holds,
by applying Lemma {\ref{2.1}},
we  get that the equation $A_2X+YA_4=A_1$ is consistent,
and the solution to this equation is
\begin{align}
\label{20210210-2}
\left\{
\begin{aligned}
X&=A_2^{\dag}A_1+PA_4,
\\
Y&=\left(I_m-A_2A_2^{\dag}\right)A_1A_4^{\dag}-A_2P,
\end{aligned}
\right.
\end{align}
for arbitrary
 $P\in \mathbb{R}_{r\times r}$.
Let $\widehat{A_1}=A_2+\epsilon Y$ and $ \widehat{A_2}=A_4+\epsilon X$.
Then
$\widehat{A_1}=A_2+\epsilon Y$ is an $r$-column full rank  dual matrix;
$\widehat{A_2}=A_4+\epsilon X$ is an $r$-row full rank  dual matrix;
$$\widehat{A_1}\widehat{A_2}=(A_2+\epsilon Y)(A_4+\epsilon X)
=
A_2A_4+\epsilon(A_2X+YA_4)=A_0+\epsilon A_1=\widehat{A}.$$
Therefore,  the dual $r$-rank decomposition of $\widehat{A}$ exists.

In summary,
the dual $r$-rank decomposition of $\widehat{A}$ exists if and only if the equation (\ref{2.15}) is consistent. Furthermore, by applying (\ref{20210210-2}), we get  (\ref{20210210-1}).
\end{proof}

Based on Theorem {\ref{2.2}},
the detailed calculation process of dual $r$-rank decomposition is given as follows,
and corresponding examples are also given to verify this process.

\textbf{(1).}
Input matrix $A_0$ and $A_1$, and the form of  dual matrix $\widehat{A}$ is
$\widehat{A}=A_0+\epsilon A_1, \ A_i \in \mathbb{R}_{m\times n}, \ {\rm rk}(A_0)=r$;

\textbf{(2).} Perform full rank decomposition on $A_0$: $A_0=A_2 A_4$, 
in which  $A_2$ is  a column full rank matrix and
$A_4$   is  a row full rank matrix;

\textbf{(3).}
Calculate the Moore-Penrose inverses of   $A_2$ and   $A_4$: $A_2^{\dag}$ and $A_4^{\dag}$;

\textbf{(4).} Check whether the matrix equation $A_2X+YA_4=A_1$ is consistent:
$$\left(I_m-A_2A_2^{\dag}\right)A_1\left(I_n-A_4^{\dag}A_4\right)=0.$$
 If the matrix equation holds,
then proceed to step (5);

\textbf{(5).} Calculate the solution to matrix equation $A_2X+YA_4=A_1$:
\begin{align}
\nonumber
\left\{
\begin{aligned}
X&=A_2^{\dag}A_1+PA_4,
\\
Y&=\left(I_m-A_2A_2^{\dag}\right)A_1A_4^{\dag}-A_2P,
\end{aligned}
\right.
\end{align}
  where $P$ is arbitrary;

\textbf{(6).} Get one dual $r$-rank decomposition of the dual matrix $\widehat{A}$:
$\widehat{A}=\widehat{A_1}\widehat{A_2}=(A_2+\epsilon A_3)(A_4+\epsilon A_5)$.

\begin{example}
Let \begin{align}
\nonumber
\widehat{A}=A_0+\epsilon A_1=
\left[\begin{matrix}
    1&   0\\
    0&   0
\end{matrix}\right]+
\epsilon\left[\begin{matrix}
    1&   1\\
    1&   1
\end{matrix}\right].
\end{align}

 By performing full rank decomposition of $A_0=A_2A_4$
where
\begin{align}
\nonumber
A_2=\left[\begin{matrix}
    1\\
    0
\end{matrix}\right] \mbox{\ \  and \ }
A_4=
\left[\begin{matrix}
1&   0
\end{matrix}\right],
\end{align}
we have
\begin{align}
\nonumber
A_2^{\dag}=\left[\begin{matrix}
    1&  0
\end{matrix}\right] \mbox{\ \  and \ }
A_4^{\dag}=
\left[\begin{matrix}
   1\\
   0
\end{matrix}\right],
\end{align}
and by calculating $\left(I_2-A_2A_2^{\dag}\right)A_1\left(I_2-A_4^{\dag}A_4\right)$,
we can get
\begin{align}
\nonumber
\left(I_2-A_2A_2^{\dag}\right)A_1\left(I_2-A_4^{\dag}A_4\right)
&=\left(\left[\begin{matrix}
   1&   0\\
   0&   1
\end{matrix}\right]-
\left[\begin{matrix}
   1\\
   0
\end{matrix}\right]
\left[\begin{matrix}
    1&  0
\end{matrix}\right]\right)
\left[\begin{matrix}
    1&   1\\
    1&   1
\end{matrix}\right]
\left(\left[\begin{matrix}
   1&   0\\
   0&   1
\end{matrix}\right]-
\left[\begin{matrix}
   1\\
   0
\end{matrix}\right]
\left[\begin{matrix}
    1&  0
\end{matrix}\right]\right)
\\
\nonumber
&=\left[\begin{matrix}
   0&   0\\
   0&   1
\end{matrix}\right]\neq 0.
\end{align}
By applying Theorem \ref{2.2},
we know that
 $\widehat{A}$ does not have the dual $r$-rank decomposition.
\end{example}

\begin{example}
\label{2.3}
Calculate the dual $r$-rank decomposition of
\begin{align}
\nonumber
\widehat{A}=A_0+\epsilon A_1=
\left[\begin{matrix}
    1&   2&  1\\
    2&   1&  1\\
    3&   3&  2
\end{matrix}\right]+
\epsilon\left[\begin{matrix}
    1&   4&  7\\
    2&   5&  8\\
    3&   6&  14
\end{matrix}\right].
\end{align}

The rank of matrix $A_0$ is ${\rm rk}(A_0)=2$.
 By performing full rank decomposition of $A_0=A_2A_4$
where
\begin{align}
\nonumber
A_2=\left[\begin{matrix}
    1&   2 \\
    2&   1 \\
    3&   3
\end{matrix}\right] \mbox{\ \  and \ }
A_4=
\left[\begin{matrix}
1&   0&  \frac{1}{3}\\
0&   1&  \frac{1}{3}
\end{matrix}\right],
\end{align}
we have
\begin{align}
\nonumber
A_2^{\dag}=\left[\begin{matrix}
-\frac{4}{9}&     \frac{5}{9}&    \frac{1}{9}\\
 \frac{5}{9}&    -\frac{4}{9}&    \frac{1}{9}
\end{matrix}\right] \mbox{\ \  and \ }
A_4^{\dag}=
\left[\begin{matrix}
\frac{10}{11}&   -\frac{1}{11}\\
-\frac{1}{11}&    \frac{10}{11}\\
\frac{3}{11}&     \frac{3}{11}
\end{matrix}\right].
\end{align}
It is easy to check that
 $\left(I_3-A_2A_2^{\dag}\right)A_1\left(I_3-A_4^{\dag}A_4\right)=0$.
Therefore,  the matrix equation {\rm(\ref{2.16})} is consistent.
Let \begin{align}
\nonumber
P=\left[\begin{matrix}
  \frac{1}{2}&  \frac{1}{2}\\
   -1&          \frac{1}{2}
\end{matrix}\right].
\end{align}
Then   the solution to {\rm(\ref{2.16})} is
\begin{align}
\nonumber
\left\{
\begin{aligned}
X&=A_2^{\dag}A_1+PA_4
=\left[\begin{matrix}
\frac{3}{2}&   \frac{13}{6}&    \frac{29}{9}\\
    -1&        \frac{7}{6}&     \frac{31}{18}
\end{matrix}\right],
\\
Y&=\left(I_3-A_2A_2^{\dag}\right)A_1A_4^{\dag}-A_2P
=\left[\begin{matrix}
\frac{1}{2}&     -\frac{1}{2}\\
      0&         -\frac{1}{2}\\
 \frac{3}{2}&          -4
\end{matrix}\right],
\end{aligned}
\right.
\end{align}

 Let $X=A_5$ and $Y=A_3$,
then we can get
\begin{align}
\nonumber
\left\{
\begin{aligned}
\widehat{A_1}&=A_2+\epsilon A_3=
\left[\begin{matrix}
    1&   2\\
    2&   1\\
    3&   3
\end{matrix}\right]
+\epsilon
\left[\begin{matrix}
\frac{1}{2}&     -\frac{1}{2}\\
      0&         -\frac{1}{2}\\
 \frac{3}{2}&          -4
\end{matrix}\right],
\\
\widehat{A_2}&=A_4+\epsilon A_5=
\left[\begin{matrix}
1&   0&  \frac{1}{3}\\
0&   1&  \frac{1}{3}
\end{matrix}\right]
+\epsilon
\left[\begin{matrix}
\frac{3}{2}&   \frac{13}{6}&    \frac{29}{9}\\
    -1&        \frac{7}{6}&     \frac{31}{18}
\end{matrix}\right].
\end{aligned}
\right.
\end{align}
Next we verify that $\widehat{A}=\widehat{A_1}\widehat{A_2}$ is a dual $r$-rank decomposition of $\widehat{A}$.
Multiplying $\widehat{A_1}$ by $\widehat{A_2}$ gives
\begin{align}
\nonumber
\widehat{A_1}\widehat{A_2}&=(A_2+\epsilon A_3)(A_4+\epsilon A_5)=
A_2A_4+\epsilon A_2A_5+\epsilon A_3A_4=A_0+\epsilon A_1
\\&
\nonumber
=\left[\begin{matrix}
    1&   2&  1\\
    2&   1&  1\\
    3&   3&  2
\end{matrix}\right]+
\epsilon\left[\begin{matrix}
    1&   4&  7\\
    2&   5&  8\\
    3&   6&  14
\end{matrix}\right].
\end{align}
Hence,
 $\widehat{A}=\widehat{A_1}\widehat{A_2}$ is a dual $r$-rank decomposition of $\widehat{A}$.
\end{example}

\begin{remark}
Since the full rank decomposition of the real part matrix $A_0$ of $\widehat{A}$ is not unique,
  the solutions $X$ and $Y$ to the matrix equation(\ref{2.16}) are not unique.
Let $P$ is a zero matrix.
By applying Theorem \ref{2.2},
it is  obvious that 
$A_2+\epsilon\left(I_m-A_2A_2^{\dag}\right)A_1A_4^{\dag}$
is an $r$-column full rank  dual matrix;
$A_4+\epsilon A_2^{\dag}A_1$
is  an $r$-row full rank dual  matrix;
\begin{align}
\label{20220216-10}
 \widehat{A}
=\left(A_2+\epsilon\left(I_m-A_2A_2^{\dag}\right)A_1A_4^{\dag}\right)\left(A_4+\epsilon A_2^{\dag}A_1\right).
\end{align}
Therefore,
(\ref{20220216-10}) is one
 dual $r$-rank decomposition of $\widehat{A}$.
 \end{remark}

\section{Applications of Dual $r$-rank Decomposition}
In this section,
we apply dual $r$-rank decomposition to studying several related problems,
 including characterization and calculation of DMPGI,  special dual matrices and their properties, 
 and dual Penrose equations.

\subsection{Dual Moore-Penrose Generalized Inverse}
Let $A_0 \in \mathbb{R}_{m\times n}, \ {\rm rk}(A_0)=r$,
and $A_0=A_2A_4$ be a full rank decomposition of $A_0$.
It is well known that
\begin{align}
\label{20220216-7}
A_0A_0^{\dag}=A_2A_2^{\dag} \mbox{\ \  and \ } A_0^{\dag}A_0=A_4^{\dag}A_4.
\end{align}
By using (\ref{20220216-7}), we can get the following Theorems.

\begin{theorem}
\label{2.4}
Let
 $\widehat{A}\in\mathbb{D}_{m\times n}$,
 $\widehat{A}= A_0+\epsilon A_1$
and
${\rm rk}(A_0)=r$.
Then the following conditions are equivalent:

{\rm (a)}.  the dual $r$-rank decomposition of $\widehat{A}$ exists;

{\rm (b)}.  $\left(I_m-A_0A_0^{\dag}\right)A_1\left(I_n-A_0^{\dag}A_0\right)=0$;

{\rm (c)}.  the DMPGI of $\widehat{A}$ exists.
\end{theorem}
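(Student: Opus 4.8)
The plan is to prove the equivalence by establishing the cycle (a) $\Leftrightarrow$ (b) and (b) $\Leftrightarrow$ (c), both of which are essentially bookkeeping once the right known results are invoked.

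First I would prove (a) $\Leftrightarrow$ (b). By Theorem \ref{2.2}, fixing any full rank decomposition $A_0 = A_2 A_4$, the dual $r$-rank decomposition of $\widehat{A}$ exists if and only if $(I_m - A_2 A_2^{\dag}) A_1 (I_n - A_4^{\dag} A_4) = 0$. Now I invoke the well-known identities \eqref{20220216-7}, namely $A_0 A_0^{\dag} = A_2 A_2^{\dag}$ and $A_0^{\dag} A_0 = A_4^{\dag} A_4$, to rewrite this condition verbatim as $(I_m - A_0 A_0^{\dag}) A_1 (I_n - A_0^{\dag} A_0) = 0$, which is exactly (b). One small point worth a sentence: the condition in (b) does not reference the chosen decomposition, so the existence of a dual $r$-rank decomposition is independent of which full rank decomposition of $A_0$ one starts from.

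Next I would prove (b) $\Leftrightarrow$ (c). This is immediate from the first cited lemma of the Preliminaries (the result of \cite{Wang2021}): it states that the DMPGI of $\widehat{A} = A_0 + \epsilon A_1$ exists if and only if $(I_m - A_0 A_0^{\dag}) A_1 (I_n - A_0^{\dag} A_0) = 0$, which is precisely condition (b). Chaining the two equivalences gives (a) $\Leftrightarrow$ (b) $\Leftrightarrow$ (c), completing the proof.

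There is no real obstacle here; the theorem is a synthesis of Theorem \ref{2.2} and the existence lemma for the DMPGI, glued together by the standard rank identities \eqref{20220216-7}. The only thing to be careful about is making the substitution via \eqref{20220216-7} explicit so that the matrix expression in Theorem \ref{2.2} (phrased with $A_2, A_4$) and the one in the DMPGI existence lemma (phrased with $A_0$) are visibly the same object; everything else is a direct citation.
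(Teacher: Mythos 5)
Your proof is correct and follows essentially the same route as the paper: Theorem \ref{2.2} combined with the identities \eqref{20220216-7} gives (a) $\Leftrightarrow$ (b), and the DMPGI existence lemma from \cite{Wang2021} gives (b) $\Leftrightarrow$ (c). Your added remark that condition (b) is independent of the chosen full rank decomposition is a worthwhile clarification the paper leaves implicit.
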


\begin{proof}
(a)$\Rightarrow$(b):
If the dual $r$-rank decomposition of $\widehat{A}$ exists,
according to Theorem \ref{2.2},
we can get (\ref{2.15}).
It follows from (\ref{20220216-7}) that $\left(I_m-A_0A_0^{\dag}\right)A_1\left(I_n-A_0^{\dag}A_0\right)=0$ holds.

(b)$\Leftarrow$(a):
When $\left(I_m-A_0A_0^{\dag}\right)A_1\left(I_n-A_0^{\dag}A_0\right)=0$ holds,
by applying (\ref{20220216-7})
we   get $\left(I_{m}-A_2A_2^{\dag}\right)A_1\left(I_{n}-A_4^{\dag}A_4\right)=0$.
It follows from  Theorem \ref{2.2},
  that the dual $r$-rank decomposition of $\widehat{A}$ exists.

 Since  DMPGI of $\widehat{A}$ exists if and only if   $\left(I_m-A_0A_0^{\dag}\right)A_1\left(I_n-A_0^{\dag}A_0\right)=0$,
then (b)$\Leftrightarrow$(c).
\end{proof}

\begin{theorem}
\label{2.6}
Let
 $\widehat{A}\in\mathbb{D}_{m\times n}$,
 $\widehat{A}= A_0+\epsilon A_1$,
${\rm rk}(A_0)=r$,
 the dual $r$-rank decomposition of $\widehat{A}$ exist,
and
the dual $r$-rank decomposition of $\widehat{A}$ be $\widehat{A}=\widehat{A_1}\widehat{A_2}$.
Then
\begin{align}
\label{2.17}
\widehat{A}^{\dag}&=\widehat{A_2}^{\dag}\widehat{A_1}^{\dag}\\
\label{2.17-1}
&=\widehat{A_2}^{T}\left(\widehat{A_2}\widehat{A_2}^{T}\right)^{-1}
\left(\widehat{A_1}^{T}\widehat{A_1}\right)^{-1}\widehat{A_1}^{T}.
\end{align}
\end{theorem}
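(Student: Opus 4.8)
The plan is to show that the dual matrix $\widehat{X}:=\widehat{A_2}^{\dag}\widehat{A_1}^{\dag}$ satisfies the four dual Penrose equations (\ref{20220216-8}) and then invoke the uniqueness of the DMPGI to conclude $\widehat{A}^{\dag}=\widehat{X}$, which is (\ref{2.17}); formula (\ref{2.17-1}) will then follow by substituting the explicit expressions for $\widehat{A_1}^{\dag}$ and $\widehat{A_2}^{\dag}$. The workhorse is Lemma \ref{2.5}: since $\widehat{A}=\widehat{A_1}\widehat{A_2}$ is a dual $r$-rank decomposition, $\widehat{A_1}$ is an $r$-column full rank dual matrix and $\widehat{A_2}$ is an $r$-row full rank dual matrix, so both DMPGIs $\widehat{A_1}^{\dag}$ and $\widehat{A_2}^{\dag}$ exist and are given by (\ref{2.20-1}) and (\ref{2.20-2}). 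First I would record the two cancellation identities $\widehat{A_1}^{\dag}\widehat{A_1}=I_r$ and $\widehat{A_2}\widehat{A_2}^{\dag}=I_r$: the former is immediate from $\widehat{A_1}^{\dag}=(\widehat{A_1}^{T}\widehat{A_1})^{-1}\widehat{A_1}^{T}$, the latter from $\widehat{A_2}^{\dag}=\widehat{A_2}^{T}(\widehat{A_2}\widehat{A_2}^{T})^{-1}$; here the inverses exist because a dual matrix is invertible exactly when its real part is, and the real parts $A_2^{T}A_2$ and $A_4A_4^{T}$ are invertible by the full-rank hypotheses of Definition \ref{2.111}.

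With these identities in hand, equation $(\widehat{1})$ becomes $\widehat{A}\widehat{X}\widehat{A}=\widehat{A_1}(\widehat{A_2}\widehat{A_2}^{\dag})(\widehat{A_1}^{\dag}\widehat{A_1})\widehat{A_2}=\widehat{A_1}\widehat{A_2}=\widehat{A}$, and $(\widehat{2})$ becomes $\widehat{X}\widehat{A}\widehat{X}=\widehat{A_2}^{\dag}(\widehat{A_1}^{\dag}\widehat{A_1})(\widehat{A_2}\widehat{A_2}^{\dag})\widehat{A_1}^{\dag}=\widehat{A_2}^{\dag}\widehat{A_1}^{\dag}=\widehat{X}$. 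For $(\widehat{3})$ one computes $\widehat{A}\widehat{X}=\widehat{A_1}(\widehat{A_2}\widehat{A_2}^{\dag})\widehat{A_1}^{\dag}=\widehat{A_1}\widehat{A_1}^{\dag}$, which is symmetric because $\widehat{A_1}^{\dag}$ is the DMPGI of $\widehat{A_1}$ and hence satisfies Penrose equation $(\widehat{3})$ for $\widehat{A_1}$; likewise $\widehat{X}\widehat{A}=\widehat{A_2}^{\dag}(\widehat{A_1}^{\dag}\widehat{A_1})\widehat{A_2}=\widehat{A_2}^{\dag}\widehat{A_2}$ is symmetric, which gives $(\widehat{4})$. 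By uniqueness of the DMPGI, $\widehat{A}^{\dag}=\widehat{A_2}^{\dag}\widehat{A_1}^{\dag}$.

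Finally, (\ref{2.17-1}) is obtained by substituting the closed forms $\widehat{A_1}^{\dag}=(\widehat{A_1}^{T}\widehat{A_1})^{-1}\widehat{A_1}^{T}$ from (\ref{2.20-1}) and $\widehat{A_2}^{\dag}=\widehat{A_2}^{T}(\widehat{A_2}\widehat{A_2}^{T})^{-1}$ from (\ref{2.20-2}) into (\ref{2.17}) and multiplying out, which yields $\widehat{A}^{\dag}=\widehat{A_2}^{T}(\widehat{A_2}\widehat{A_2}^{T})^{-1}(\widehat{A_1}^{T}\widehat{A_1})^{-1}\widehat{A_1}^{T}$.

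I do not anticipate a genuine obstacle here; the computation is short once the reductions are set up. The only points requiring care are the verification that the dual matrices $\widehat{A_1}^{T}\widehat{A_1}$ and $\widehat{A_2}\widehat{A_2}^{T}$ are truly invertible (so that the cancellation identities and formula (\ref{2.17-1}) are meaningful) and the observation that $\widehat{A_1}$ and $\widehat{A_2}$ each admit a DMPGI together with the associated Penrose identities needed for the symmetry of $\widehat{A}\widehat{X}$ and $\widehat{X}\widehat{A}$ — all of which are furnished by Lemma \ref{2.5} under the hypotheses of the theorem.
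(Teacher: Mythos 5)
Your proposal is correct and follows essentially the same route as the paper: both verify that the candidate $\widehat{A_2}^{\dag}\widehat{A_1}^{\dag}=\widehat{A_2}^{T}\bigl(\widehat{A_2}\widehat{A_2}^{T}\bigr)^{-1}\bigl(\widehat{A_1}^{T}\widehat{A_1}\bigr)^{-1}\widehat{A_1}^{T}$ satisfies the four dual Penrose equations via the cancellations $\widehat{A_1}^{\dag}\widehat{A_1}=I_r$ and $\widehat{A_2}\widehat{A_2}^{\dag}=I_r$, then invoke uniqueness of the DMPGI and Lemma \ref{2.5}. Your explicit justification of the invertibility of $\widehat{A_1}^{T}\widehat{A_1}$ and $\widehat{A_2}\widehat{A_2}^{T}$ is a small point of added care that the paper leaves implicit.
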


\begin{proof}
Since the dual $r$-rank decomposition of $\widehat{A}$ exists,
from Theorem {\ref{2.4}}, we see that the DMPGI of $\widehat{A}$ exists. 
Let $\widehat{A}=\widehat{A_1}\widehat{A_2}$ be a dual $r$-rank decomposition of $\widehat{A}$,
and denote
$$\widehat{X}
=
\widehat{A_2}^{T}\left(\widehat{A_2}\widehat{A_2}^{T}\right)^{-1}
\left(\widehat{A_1}^{T}\widehat{A_1}\right)^{-1}\widehat{A_1}^{T}.$$
We verify that $\widehat{X}$ satisfies the four dual Penrose equations(\ref{20220216-8}):
\begin{align}
\nonumber
&(1)\ \widehat{A}\widehat{X}\widehat{A}=
\widehat{A_1}\widehat{A_2}\widehat{A_2}^{T}\left(\widehat{A_2}\widehat{A_2}^{T}\right)^{-1}\left(\widehat{A_1}^{T}\widehat{A_1}\right)^{-1}\widehat{A_1}^{T}\widehat{A_1}\widehat{A_2}
=\widehat{A};
\\
\nonumber
&(2)\ \widehat{X}\widehat{A}\widehat{X}
=\widehat{A_2}^{T}\left(\widehat{A_2}\widehat{A_2}^{T}\right)^{-1}\left(\widehat{A_1}^{T}\widehat{A_1}\right)^{-1}\widehat{A_1}^{T}\widehat{A_1}\widehat{A_2}
\widehat{A_2}^{T}\left(\widehat{A_2}\widehat{A_2}^{T}\right)^{-1}\left(\widehat{A_1}^{T}\widehat{A_1}\right)^{-1}\widehat{A_1}^{T}
=\widehat{X};
\\
\nonumber
&(3)\ \left(\widehat{A}\widehat{X}\right)^{T}=
\left(\widehat{A_1}\widehat{A_2}\widehat{A_2}^{T}\left(\widehat{A_2}\widehat{A_2}^{T}\right)^{-1}\left(\widehat{A_1}^{T}\widehat{A_1}\right)^{-1}\widehat{A_1}^{T}\right)^{T}
=\widehat{A_1}\left(\widehat{A_1}^{T}\widehat{A_1}\right)^{-1}\widehat{A_1}^{T}
=\widehat{A}\widehat{X};
\\
\nonumber
&(4)\ \left(\widehat{X}\widehat{A}\right)^{T}=
\left(\widehat{A_2}^{T}\left(\widehat{A_2}\widehat{A_2}^{T}\right)^{-1}\left(\widehat{A_1}^{T}\widehat{A_1}\right)^{-1}\widehat{A_1}^{T}\widehat{A_1}\widehat{A_2}\right)^{T}
=\widehat{A_2}^{T}\left(\widehat{A_2}\widehat{A_2}^{T}\right)^{-1}\widehat{A_2}
=\widehat{X}\widehat{A}.
\end{align}
Since $\widehat{A}^{\dag}$ satisfying  the four equations is unique,
then $\widehat{X}=\widehat{A}^{\dag}$.

Furthermore,
according to Lemma \ref{2.5},
we see
 $\widehat{A_1}^{\dag}=\left(\widehat{A_1}^{T}\widehat{A_1}\right)^{-1}\widehat{A_1}^{T}$
 and
$\widehat{A_2}^{\dag}=\widehat{A_2}^{T}\left(\widehat{A_2}\widehat{A_2}^{T}\right)^{-1}$.
So, $\widehat{A}^{\dag}$ can be further expressed as
$\widehat{A}^{\dag}=\widehat{A_2}^{\dag}\widehat{A_1}^{\dag}$,
that is,  (\ref{2.17}).
\end{proof}

\begin{theorem}
\label{2.7}
Let
 $\widehat{A}\in\mathbb{D}_{m\times n}$,
 $\widehat{A}= A_0+\epsilon A_1$ and
${\rm rk}(A_0)=r$.
 Let $A_0=A_2A_4$ be a full rank decomposition of $A_0$.
Let
 $\widehat{A}=\widehat{A_1}\widehat{A_2}$ be a dual $r$-rank decomposition of $\widehat{A}$
where $\widehat{A_1}=A_2+\epsilon A_3$ and $\widehat{A_2}=A_4+\epsilon A_5$.
Then the DMPGI of $\widehat{A}$ exists,
and
\begin{align}
\nonumber
\widehat{A}^{\dag}&=A_4^{\dag}A_2^{\dag}+
\epsilon\left(A_4^{\dag}\left(A_2^{T}A_2\right)^{-1}
\left(A_3^{T}
-
 Q_{A_2,A_3}^{S}A_2^{\dag}\right)\right.
 \\
\label{2.18}
 &
 \qquad \qquad \qquad \qquad   \qquad \qquad
\left.+\left(A_5^{T}
-A_4^{\dag}Q_{A_4^{T},A_5^{T}}^{S}\right)\left(A_4A_4^{T}\right)^{-1}A_2^{\dag}\right),
\end{align}
where $Q_{A_4^{T},A_5^{T}}^{S}=A_4A_5^{T}+A_5A_4^{T}$ and $Q_{A_2,A_3}^{S}=A_2^{T}A_3+A_3^{T}A_2$.
\end{theorem}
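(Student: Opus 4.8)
The plan is to derive (\ref{2.18}) directly from the factorization $\widehat{A}^{\dag}=\widehat{A_2}^{\dag}\widehat{A_1}^{\dag}$ established in Theorem \ref{2.6}, by substituting the closed-form expressions for $\widehat{A_1}^{\dag}$ and $\widehat{A_2}^{\dag}$ supplied by Lemma \ref{2.5} and carrying out the dual product.

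First I would observe that, since $\widehat{A}=\widehat{A_1}\widehat{A_2}$ is by hypothesis a dual $r$-rank decomposition, the dual $r$-rank decomposition of $\widehat{A}$ exists; hence by Theorem \ref{2.4} the DMPGI of $\widehat{A}$ exists, and by Theorem \ref{2.6} we have $\widehat{A}^{\dag}=\widehat{A_2}^{\dag}\widehat{A_1}^{\dag}$. Next, because $\widehat{A_1}=A_2+\epsilon A_3$ is $r$-column full rank (so ${\rm rk}(A_2)=r$) and $\widehat{A_2}=A_4+\epsilon A_5$ is $r$-row full rank (so ${\rm rk}(A_4)=r$), Lemma \ref{2.5} applies. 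Using the identities $A_2^{\dag}=\left(A_2^{T}A_2\right)^{-1}A_2^{T}$ and $A_4^{\dag}=A_4^{T}\left(A_4A_4^{T}\right)^{-1}$ to simplify (\ref{2.20}) and (\ref{2.19}), I would rewrite them in the compact form
\[
\widehat{A_1}^{\dag}=A_2^{\dag}+\epsilon\left(A_2^{T}A_2\right)^{-1}\left(A_3^{T}-Q_{A_2,A_3}^{S}A_2^{\dag}\right),\qquad
\widehat{A_2}^{\dag}=A_4^{\dag}+\epsilon\left(A_5^{T}-A_4^{\dag}Q_{A_4^{T},A_5^{T}}^{S}\right)\left(A_4A_4^{T}\right)^{-1}.
\]

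Finally I would multiply these two dual matrices. The $\epsilon^{2}$ term drops out, the real part of the product is $A_4^{\dag}A_2^{\dag}$, and the coefficient of $\epsilon$ is
\[
A_4^{\dag}\left(A_2^{T}A_2\right)^{-1}\left(A_3^{T}-Q_{A_2,A_3}^{S}A_2^{\dag}\right)+\left(A_5^{T}-A_4^{\dag}Q_{A_4^{T},A_5^{T}}^{S}\right)\left(A_4A_4^{T}\right)^{-1}A_2^{\dag},
\]
which is precisely (\ref{2.18}). The argument is essentially bookkeeping: the only points that require attention are checking that the rank hypotheses of Lemma \ref{2.5} are satisfied (immediate from the definitions of $r$-column and $r$-row full rank dual matrix) and regrouping the terms so that $A_2^{\dag}$ and $A_4^{\dag}$ appear in place of their expanded forms; I do not anticipate any genuine obstacle.
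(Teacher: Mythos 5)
Your proposal is correct and follows essentially the same route as the paper: both invoke Theorem \ref{2.6} for $\widehat{A}^{\dag}=\widehat{A_2}^{\dag}\widehat{A_1}^{\dag}$, substitute the expressions (\ref{2.20}) and (\ref{2.19}) from Lemma \ref{2.5}, and simplify via $A_2^{\dag}=\left(A_2^{T}A_2\right)^{-1}A_2^{T}$ and $A_4^{\dag}=A_4^{T}\left(A_4A_4^{T}\right)^{-1}$. The only difference is that you perform this simplification before multiplying rather than after, which is purely a matter of bookkeeping order.
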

\begin{proof}
According to Lemma \ref{2.5},
by substituting  (\ref{2.20}) and (\ref{2.19}) into  (\ref{2.17}),
we can get
\begin{align}
\nonumber
\widehat{A}^{\dag}&=\left(A_4^{T}\left(A_4A_4^{T}\right)^{-1}
+
\epsilon\left(A_5^{T}\left(A_4A_4^{T}\right)^{-1}
-A_4^{T}\left(A_4A_4^{T}\right)^{-1}Q_{A_4^{T},A_5^{T}}^{S}
\left(A_4A_4^{T}\right)^{-1}\right)\right)
\\&
\nonumber
\quad
\left(\left(A_2^{T}A_2\right)^{-1}A_2^{T}
+
\epsilon\left(\left(A_2^{T}A_2\right)^{-1}A_3^{T}-\left(A_2^{T}A_2\right)^{-1}
Q_{A_2,A_3}^{S}\left(A_2^{T}A_2\right)^{-1}A_2^{T}\right)\right).
\end{align}
Furthermore,
from %
$A_4^{\dag}=A_4^{T}\left(A_4A_4^{T}\right)^{-1}$ and $A_2^{\dag}=\left(A_2^{T}A_2\right)^{-1}A_2^{T}$,
we can get the formula for DMPGI $\widehat{A}^\dag$ as shown in (\ref{2.18}).
\end{proof}

Based on Theorem \ref{2.7},
the detailed calculation process of DMPGI is given  below,
and one corresponding example is also given to verify.

\textbf{(1).} Input matrix $A_0,A_1$,
and the form of the dual matrix $\widehat{A}$ is
$\widehat{A}=A_0+\epsilon A_1,\ A_i \in \mathbb{R}_{m\times n},\ {\rm rk}(A_0)=r$;

\textbf{(2).} According to the method of calculating dual $r$-rank decomposition,
we get
 $\widehat{A}=\widehat{A_1}\widehat{A_2}$
 where
$\widehat{A_1}=A_2+\epsilon A_3$ is an $r$-column full rank  dual matrix and
$\widehat{A_2}=A_4+\epsilon A_5$ is an $r$-row full rank  dual matrix;

\textbf{(3).}  Calculate
$ A_4^{\dag}$, $A_2^{\dag}$ and  $ A_4^{\dag}A_2^{\dag}$;

\textbf{(4).} Calculate
$ A_4^{\dag}\left(A_2^{T}A_2\right)^{-1}
\left(A_3^{T}
-
 Q_{A_2,A_3}^{S}A_2^{\dag}\right)
+\left(A_5^{T}
-A_4^{\dag}Q_{A_4^{T},A_5^{T}}^{S}\right)
\left(A_4A_4^{T}\right)^{-1}A_2^{\dag} $;

\textbf{(5).} Get the DMPGI $\widehat{A}^{\dag}$ of $\widehat{A}$.

\begin{example}
Let $\widehat{A}$, $A_2$, $A_3$, $A_4$ and $A_5$ be as given in Example {\rm \ref{2.3}}.
By applying     (\ref{2.18}),
we can get the following result: \\
$\widehat{X}
=
A_4^{\dag}A_2^{\dag}+\epsilon \left(A_4^{\dag}\left(A_2^{T}A_2\right)^{-1}
\left(A_3^{T}
-
 Q_{A_2,A_3}^{S}A_2^{\dag}\right)
+\left(A_5^{T}
-A_4^{\dag}Q_{A_4^{T},A_5^{T}}^{S}\right)\left(A_4A_4^{T}\right)^{-1}A_2^{\dag}\right)$
\\
{\footnotesize$=\left[\begin{matrix}
1&   0&   \frac{1}{3}\\
0&   1&   \frac{1}{3}
\end{matrix}\right]^{T}
\left(\left[\begin{matrix}
1&   0&   \frac{1}{3}\\
0&   1&   \frac{1}{3}
\end{matrix}\right]
\left[\begin{matrix}
1&   0&   \frac{1}{3}\\
0&   1&   \frac{1}{3}
\end{matrix}\right]^{T}\right)^{-1}
\left(\left[\begin{matrix}
    1&   2\\
    2&   1\\
    3&   3
\end{matrix}\right]^{T}
\left[\begin{matrix}
    1&   2\\
    2&   1\\
    3&   3
\end{matrix}\right]\right)^{-1}
\left[\begin{matrix}
    1&   2\\
    2&   1\\
    3&   3
\end{matrix}\right]^{T}
\\
+\epsilon\left\{\left[\begin{matrix}
1&   0&   \frac{1}{3}\\
0&   1&   \frac{1}{3}
\end{matrix}\right]^{T}
\left(\left[\begin{matrix}
1&   0&   \frac{1}{3}\\
0&   1&   \frac{1}{3}
\end{matrix}\right]
\left[\begin{matrix}
1&   0&   \frac{1}{3}\\
0&   1&   \frac{1}{3}
\end{matrix}\right]^{T}\right)^{-1}
\left(\left[\begin{matrix}
    1&   2\\
    2&   1\\
    3&   3
\end{matrix}\right]^{T}
\left[\begin{matrix}
    1&   2\\
    2&   1\\
    3&   3
\end{matrix}\right]\right)^{-1}
\left[\begin{matrix}
\frac{1}{2}&     -\frac{1}{2}\\
      0&         -\frac{1}{2}\\
 \frac{3}{2}&          -4
\end{matrix}\right]^{T}\right.\\
-\left[\begin{matrix}
1&   0&   \frac{1}{3}\\
0&   1&   \frac{1}{3}
\end{matrix}\right]^{T}
\left(\left[\begin{matrix}
1&   0&   \frac{1}{3}\\
0&   1&   \frac{1}{3}
\end{matrix}\right]
\left[\begin{matrix}
1&   0&   \frac{1}{3}\\
0&   1&   \frac{1}{3}
\end{matrix}\right]^{T}\right)^{-1}
\left(\left[\begin{matrix}
    1&   2\\
    2&   1\\
    3&   3
\end{matrix}\right]^{T}
\left[\begin{matrix}
\frac{1}{2}&     -\frac{1}{2}\\
      0&         -\frac{1}{2}\\
 \frac{3}{2}&          -4
\end{matrix}\right]+
\left[\begin{matrix}
\frac{1}{2}&     -\frac{1}{2}\\
      0&         -\frac{1}{2}\\
 \frac{3}{2}&          -4
\end{matrix}\right]^{T}
\left[\begin{matrix}
    1&   2\\
    2&   1\\
    3&   3
\end{matrix}\right]\right)
\\ \times
\left(\left[\begin{matrix}
    1&   2\\
    2&   1\\
    3&   3
\end{matrix}\right]^{T}
\left[\begin{matrix}
    1&   2\\
    2&   1\\
    3&   3
\end{matrix}\right]\right)^{-1}
\left[\begin{matrix}
    1&   2\\
    2&   1\\
    3&   3
\end{matrix}\right]^{T}
+\left[\begin{matrix}
\frac{3}{2}&   \frac{13}{6}&    \frac{29}{9}\\
    -1&        \frac{7}{6}&     \frac{31}{18}
\end{matrix}\right]^{T}
\left(\left[\begin{matrix}
1&   0&   \frac{1}{3}\\
0&   1&   \frac{1}{3}
\end{matrix}\right]
\left[\begin{matrix}
1&   0&   \frac{1}{3}\\
0&   1&   \frac{1}{3}
\end{matrix}\right]^{T}\right)^{-1}\\
\times
\left(\left[\begin{matrix}
    1&   2\\
    2&   1\\
    3&   3
\end{matrix}\right]^{T}
\left[\begin{matrix}
    1&   2\\
    2&   1\\
    3&   3
\end{matrix}\right]\right)^{-1}
\left[\begin{matrix}
    1&   2\\
    2&   1\\
    3&   3
\end{matrix}\right]^{T}
+\left[\begin{matrix}
1&   0&   \frac{1}{3}\\
0&   1&   \frac{1}{3}
\end{matrix}\right]^{T}
\left(\left[\begin{matrix}
1&   0&   \frac{1}{3}\\
0&   1&   \frac{1}{3}
\end{matrix}\right]
\left[\begin{matrix}
1&   0&   \frac{1}{3}\\
0&   1&   \frac{1}{3}
\end{matrix}\right]^{T}\right)^{-1}
\\ \times
\left(\left[\begin{matrix}
1&   0&   \frac{1}{3}\\
0&   1&   \frac{1}{3}
\end{matrix}\right]
\left[\begin{matrix}
\frac{3}{2}&   \frac{13}{6}&    \frac{29}{9}\\
    -1&        \frac{7}{6}&     \frac{31}{18}
\end{matrix}\right]^{T}+\\
\left[\begin{matrix}
\frac{3}{2}&   \frac{13}{6}&    \frac{29}{9}\\
    -1&        \frac{7}{6}&     \frac{31}{18}
\end{matrix}\right]
\left[\begin{matrix}
1&   0&   \frac{1}{3}\\
0&   1&   \frac{1}{3}
\end{matrix}\right]^{T}\right)
\\ \times
\left.
\left(\left[\begin{matrix}
1&   0&   \frac{1}{3}\\
0&   1&   \frac{1}{3}
\end{matrix}\right]
\left[\begin{matrix}
1&   0&   \frac{1}{3}\\
0&   1&   \frac{1}{3}
\end{matrix}\right]^{T}\right)^{-1}
\left(\left[\begin{matrix}
    1&   2\\
    2&   1\\
    3&   3
\end{matrix}\right]^{T}
\left[\begin{matrix}
    1&   2\\
    2&   1\\
    3&   3
\end{matrix}\right]\right)^{-1}
\left[\begin{matrix}
    1&   2\\
    2&   1\\
    3&   3
\end{matrix}\right]^{T}\right\}
\\
=\left[\begin{matrix}
    -\frac{5}{11}&    \frac{6}{11}&  \frac{1}{11}\\
     \frac{6}{11}&   -\frac{5}{11}&  \frac{1}{11}\\
     \frac{1}{33}&    \frac{1}{33}&  \frac{2}{33}
\end{matrix}\right]+\epsilon
\left[\begin{matrix}
    -\frac{31}{33}&    -\frac{16}{33}&     \frac{1}{33}\\
     \frac{2}{11}&      \frac{7}{11}&     -\frac{8}{11}\\
    -\frac{25}{99}&     \frac{38}{99}&     \frac{10}{99}
\end{matrix}\right].$}

Furthermore,
 we prove that $\widehat{X}$ satisfies the following four Penrose equations::

{\rm (1)}. \  {\footnotesize$
\widehat{A}\widehat{X}\widehat{A}
=\left[\begin{matrix}
    1&   2&  1\\
    2&   1&  1\\
    3&   3&  2
\end{matrix}\right]+\epsilon
\left[\begin{matrix}
    1&   4&  7\\
    2&   5&  8\\
    3&   6&  14
\end{matrix}\right]
=\widehat{A};
$}

{\rm (2)}. \   {\footnotesize$
\widehat{X}\widehat{A}\widehat{X}
=\left[\begin{matrix}
    -\frac{5}{11}&    \frac{6}{11}&  \frac{1}{11}\\
     \frac{6}{11}&   -\frac{5}{11}&  \frac{1}{11}\\
     \frac{1}{33}&    \frac{1}{33}&  \frac{2}{33}
\end{matrix}\right]+\epsilon
\left[\begin{matrix}
    -\frac{31}{33}&    -\frac{16}{33}&     \frac{1}{33}\\
     \frac{2}{11}&      \frac{7}{11}&     -\frac{8}{11}\\
    -\frac{25}{99}&     \frac{38}{99}&     \frac{10}{99}
\end{matrix}\right]
=\widehat{X};
$}

{\rm (3)}. \  {\footnotesize$
\left(\widehat{A}\widehat{X}\right)^{T}
=\left(\left[\begin{matrix}
    \frac{2}{3}&   -\frac{1}{3}&  \frac{1}{3}\\
   -\frac{1}{3}&    \frac{2}{3}&  \frac{1}{3}\\
    \frac{1}{3}&    \frac{1}{3}&  \frac{2}{3}
\end{matrix}\right]+\epsilon\left[\begin{matrix}
     \frac{10}{9}&   \frac{1}{9}&  -\frac{4}{9}\\
     \frac{1}{9}&   -\frac{8}{9}&   \frac{5}{9}\\
    -\frac{4}{9}&    \frac{5}{9}&  -\frac{2}{9}
\end{matrix}\right]\right)^{T}
=\widehat{A}\widehat{X};
$}

{\rm (4)}. \ {\footnotesize
$
\left(\widehat{X}\widehat{A}\right)^{T}
 =\left(\left[\begin{matrix}
    \frac{10}{11}&   -\frac{1}{11}&   \frac{3}{11}\\
   -\frac{1}{11}&     \frac{10}{11}&  \frac{3}{11}\\
    \frac{3}{11}&     \frac{3}{11}&   \frac{2}{11}
\end{matrix}\right]+\epsilon\left[\begin{matrix}
    -\frac{10}{11}&   -\frac{9}{11}&   \frac{12}{11}\\
    -\frac{9}{11}&    -\frac{8}{11}&   \frac{9}{11}\\
     \frac{12}{11}&    \frac{9}{11}&   \frac{18}{11}
\end{matrix}\right]\right)^{T}
=\widehat{X}\widehat{A}.
$}
\\
Therefore,
$
\widehat{A}^\dag=\widehat{X}=\left[\begin{matrix}
    -\frac{5}{11}&    \frac{6}{11}&  \frac{1}{11}\\
     \frac{6}{11}&   -\frac{5}{11}&  \frac{1}{11}\\
     \frac{1}{33}&    \frac{1}{33}&  \frac{2}{33}
\end{matrix}\right]+\epsilon
\left[\begin{matrix}
    -\frac{31}{33}&    -\frac{16}{33}&     \frac{1}{33}\\
     \frac{2}{11}&      \frac{7}{11}&     -\frac{8}{11}\\
    -\frac{25}{99}&     \frac{38}{99}&     \frac{10}{99}
\end{matrix}\right]$.
\end{example}

\subsection{Dual Idempotent Matrix}
In {\cite{Udwadia2021}},
Udwadia discussed  several types of special dual idempotent matrices,
such as $\widehat{A}\widehat{A}^{\dag}$,
$\widehat{A}^{\dag}\widehat{A}$,
$ I_m-\widehat{A}\widehat{A}^{\dag}$
and $I_n-\widehat{A}^{\dag}\widehat{A}$.
In this subsection,
  we
give some  characterizations of dual idempotent matrix  and its DMPGI  by applying the dual $r$-rank decomposition.

\begin{definition}[{\cite{Udwadia2021}}]
\label{3.1-1}
Let
 $\widehat{A}\in\mathbb{D}_{n\times n}$,
 $\widehat{A}= A_0+\epsilon A_1$
  and
  ${\rm rk}(A_0)=r$.
If $\widehat{A}$ satisfies $\widehat{A}^{2}=\widehat{A}$,
then $\widehat{A}$ is called dual idempotent matrix.
\end{definition}

\begin{theorem}
\label{3.1}
Let
 $\widehat{A}\in\mathbb{D}_{n\times n}$,
 $\widehat{A}= A_0+\epsilon A_1$
 and ${\rm rk}(A_0)=r$.
 Then
$\widehat{A}$ is a dual idempotent matrix if and only if
\begin{align}
\label{3.11}
A_0  =A_0^{2} \mbox{ \ \ and \ }
 A_1 = A_0A_1+A_1A_0 .
\end{align}
\end{theorem}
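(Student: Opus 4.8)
The plan is to compute $\widehat{A}^{2}$ directly from the representation $\widehat{A}=A_0+\epsilon A_1$ and then to match it against $\widehat{A}$ coefficient-wise in $\epsilon$. First I would expand
\[
\widehat{A}^{2}=(A_0+\epsilon A_1)(A_0+\epsilon A_1)=A_0^{2}+\epsilon\left(A_0A_1+A_1A_0\right)+\epsilon^{2}A_1^{2},
\]
and invoke the rule $\epsilon^{2}=0$ (together with the absorption rules $0\epsilon=\epsilon0=0$) to discard the last term, so that $\widehat{A}^{2}=A_0^{2}+\epsilon\left(A_0A_1+A_1A_0\right)$ is again presented in the standard dual form, with real part $A_0^{2}$ and dual part $A_0A_1+A_1A_0$.

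Next I would use the elementary fact that two dual matrices $B_0+\epsilon B_1$ and $C_0+\epsilon C_1$ coincide if and only if $B_0=C_0$ and $B_1=C_1$, which holds because $\{1,\epsilon\}$ is linearly independent over $\mathbb{R}$. Applying this to the required identity $\widehat{A}^{2}=\widehat{A}$, that is, to $A_0^{2}+\epsilon\left(A_0A_1+A_1A_0\right)=A_0+\epsilon A_1$, one reads off precisely the two matrix equations $A_0=A_0^{2}$ and $A_1=A_0A_1+A_1A_0$ of~(\ref{3.11}). Conversely, assuming these two equations and reassembling the dual expansion above immediately gives $\widehat{A}^{2}=\widehat{A}$, which completes the equivalence.

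I do not expect any genuine obstacle here: the single point requiring a word of care is that the simplification $\epsilon^{2}A_1^{2}=0$ is valid even though $A_1^{2}$ itself need not vanish, and this is immediate from $\epsilon^{2}=0$. Note also that the hypothesis ${\rm rk}(A_0)=r$ is not actually used in this characterization; it is retained only for uniformity with the neighbouring statements, and in particular no appeal to the dual $r$-rank decomposition or to the preliminary lemmas on matrix equations is needed.
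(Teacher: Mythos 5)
Your proposal is correct and follows essentially the same route as the paper: expand $\widehat{A}^{2}=A_0^{2}+\epsilon\left(A_0A_1+A_1A_0\right)$ using $\epsilon^{2}=0$ and compare real and dual parts against $\widehat{A}=A_0+\epsilon A_1$ in both directions. Your added remarks (that $\epsilon^{2}A_1^{2}=0$ regardless of $A_1^{2}$, and that the rank hypothesis is not used) are accurate but not needed beyond what the paper does.
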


\begin{proof}
$"\Rightarrow"$:
If $\widehat{A}=A_0+\epsilon A_1$ is a dual idempotent matrix,
then we have
 $\widehat{A}^{2}=\widehat{A}$ and
$A_0^{2}+\epsilon\left(A_0A_1+A_1A_0\right)=A_0+\epsilon A_1$.
Therefore (\ref{3.11}) is established.

$"\Leftarrow"$:
Since $\widehat{A}=A_0+\epsilon A_1$,
 it is obvious that $\widehat{A}^{2}=A_0^{2}+\epsilon\left(A_0A_1+A_1A_0\right)$.
It follows from (\ref{3.11}) that
$\widehat{A}^{2}=A_0 +\epsilon A_1 =\widehat{A}$.
Therefore, according to Definition \ref{3.1-1},
we see that $\widehat{A}$ is a dual idempotent matrix.
\end{proof}

\begin{corollary}
Let
 $\widehat{A}\in\mathbb{D}_{n\times n}$,
 $\widehat{A}= A_0+\epsilon A_1$
 and ${\rm rk}(A_0)=r$.
If $\widehat{A}$ is a dual idempotent matrix,
and the real part matrix $A_0$ is invertible,
then $\widehat{A}=I_n$.
\end{corollary}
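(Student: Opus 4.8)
The plan is to reduce the dual-matrix identity $\widehat{A}^2=\widehat{A}$ to a pair of real-matrix identities via Theorem \ref{3.1}, and then to use invertibility of $A_0$ to collapse both. By Theorem \ref{3.1}, $\widehat{A}=A_0+\epsilon A_1$ being dual idempotent is equivalent to
\[
A_0=A_0^{2}\qquad\text{and}\qquad A_1=A_0A_1+A_1A_0 .
\]
So the proof will consist of extracting consequences of these two equations under the extra hypothesis that $A_0$ is invertible.

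First I would handle the real part. Since $A_0$ is invertible, multiplying $A_0=A_0^{2}$ on the left (or right) by $A_0^{-1}$ gives $I_n=A_0$; that is, the real part is forced to be the identity. Next I would substitute $A_0=I_n$ into the second identity $A_1=A_0A_1+A_1A_0$, which becomes $A_1=I_nA_1+A_1I_n=2A_1$, hence $A_1=0$. Combining the two, $\widehat{A}=A_0+\epsilon A_1=I_n+\epsilon\,0=I_n$, as claimed.

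There is essentially no obstacle here: the statement is an immediate corollary of Theorem \ref{3.1} together with the cancellation afforded by an invertible real part, so the only thing to be careful about is to invoke the already-established characterization (\ref{3.11}) rather than re-deriving the expansion of $\widehat{A}^{2}$ from scratch. If one prefers a self-contained argument, the same two steps can be run directly on $\widehat{A}^{2}=\widehat{A}$ after expanding $A_0^{2}+\epsilon(A_0A_1+A_1A_0)=A_0+\epsilon A_1$ and matching the real and dual parts, but routing through Theorem \ref{3.1} is cleaner.
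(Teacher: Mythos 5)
Your proposal is correct and follows exactly the paper's own argument: invoke Theorem \ref{3.1} to get $A_0=A_0^{2}$ and $A_1=A_0A_1+A_1A_0$, use invertibility to conclude $A_0=I_n$, and then deduce $A_1=2A_1=0$. No differences worth noting.
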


\begin{proof}
According to the Theorem \ref{3.1},
if $\widehat{A}$ is a dual idempotent matrix,
then the equation (\ref{3.11}) holds.
If the real matrix $A_0$ is invertible,
we can get $A_0=I_n$.
Since $A_0=I_n$ and $A_1 = A_0A_1+A_1A_0$,
it is easy to check that $A_1=0$.
Hence,  $\widehat{A}=I_n$.
\end{proof}

\begin{theorem}
\label{3.0}
Let
 $\widehat{A}\in\mathbb{D}_{n\times n}$,
 $\widehat{A}= A_0+\epsilon A_1$ and
${\rm rk}(A_0)=r$.
Let $A_0=A_2A_4$ be a full rank decomposition of $A_0$.
Then the dual $r$-rank decomposition of $\widehat{A}$ exists,
and
 \begin{align}
 \widehat{A}=
 \left(A_2+\epsilon A_1A_2\right)\left(A_4+\epsilon A_4A_1\right)
 \end{align}
 which is a   dual $r$-rank decomposition of $\widehat{A}$.
\end{theorem}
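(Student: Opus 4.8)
The plan is to prove the statement directly by exhibiting the claimed factorization, using only the two identities $A_0=A_0^{2}$ and $A_1=A_0A_1+A_1A_0$ that Theorem \ref{3.1} attaches to a dual idempotent matrix $\widehat{A}$ (which is the object under consideration in this subsection), together with the elementary projector identities $\left(I_n-A_0A_0^{\dag}\right)A_0=0$ and $A_0\left(I_n-A_0^{\dag}A_0\right)=0$. Concretely, I would first check that the two dual matrices $\widehat{A_1}=A_2+\epsilon A_1A_2$ and $\widehat{A_2}=A_4+\epsilon A_4A_1$ are of the right type in the sense of Definition \ref{2.111}: their real parts are exactly $A_2$ and $A_4$, and since $A_0=A_2A_4$ is a full rank decomposition, $A_2$ is column full rank and $A_4$ is row full rank, so $\widehat{A_1}$ is an $r$-column full rank dual matrix and $\widehat{A_2}$ is an $r$-row full rank dual matrix.

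Next I would compute the product. Expanding and discarding the $\epsilon^{2}$ term gives
\[
\left(A_2+\epsilon A_1A_2\right)\left(A_4+\epsilon A_4A_1\right)=A_2A_4+\epsilon\left(A_2A_4A_1+A_1A_2A_4\right)=A_0+\epsilon\left(A_0A_1+A_1A_0\right),
\]
and the identity $A_1=A_0A_1+A_1A_0$ from Theorem \ref{3.1} collapses the dual part to $A_1$, so the product equals $A_0+\epsilon A_1=\widehat{A}$. Since $\widehat{A}$ has thereby been written as a product of an $r$-column full rank dual matrix and an $r$-row full rank dual matrix whose real parts are the chosen full rank decomposition factors of $A_0$, the dual $r$-rank decomposition of $\widehat{A}$ exists and the displayed factorization is one such decomposition. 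As an alternative route to existence, one can invoke Theorem \ref{2.4}: substituting $A_1=A_0A_1+A_1A_0$ into $\left(I_n-A_0A_0^{\dag}\right)A_1\left(I_n-A_0^{\dag}A_0\right)$ and applying $\left(I_n-A_0A_0^{\dag}\right)A_0=0$ to the first summand and $A_0\left(I_n-A_0^{\dag}A_0\right)=0$ to the second shows this expression vanishes, which is precisely condition (b) of Theorem \ref{2.4}.

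I do not anticipate a genuine obstacle; the argument is a short calculation. The only point requiring care is the bookkeeping, in particular keeping track that the hypothesis in force is dual idempotency of $\widehat{A}$, so that Theorem \ref{3.1} is available and each of the two terms arising from $A_1=A_0A_1+A_1A_0$ is handled — either by contributing $A_2A_4A_1=A_0A_1$ and $A_1A_2A_4=A_1A_0$ in the product computation, or by being annihilated by the appropriate one-sided projector identity in the alternative verification.
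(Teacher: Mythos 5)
Your proposal is correct and follows essentially the same route as the paper: both verify that the real parts $A_2$ and $A_4$ make the two factors $r$-column and $r$-row full rank dual matrices, then expand the product and collapse the dual part via $A_1=A_0A_1+A_1A_0$ from Theorem \ref{3.1}. You rightly note that dual idempotency of $\widehat{A}$ must be assumed (the theorem statement omits it, but the paper's proof also invokes it), and your alternative existence check via condition (b) of Theorem \ref{2.4} is a harmless bonus.
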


\begin{proof}
Let $\widehat{A}$ be a dual idempotent matrix,
then the equation (\ref{3.11}) holds.
Let
$A_0=A_2A_4$ be a full rank decomposition of $A_0$,
where $A_2$ is a column full  rank matrix,
and $A_4$ is a row full rank matrix.
Write
$\widehat{X}=A_2+\epsilon A_1A_2  $
and
$\widehat{Y}=A_4+\epsilon A_4A_1 $.
It is obvious  that
$\widehat{X}$ is an $r$-column full rank dual  matrix
and
 $\widehat{Y}$ is an $r$-row full rank dual  matrix.
 It follows from   (\ref{3.11})
 that
\begin{align*}
\widehat{X} \widehat{Y}
&=
\left(A_2+\epsilon A_1A_2\right)\left(A_4+\epsilon A_4A_1\right)
=A_2A_4+\epsilon \left(A_2A_4A_1+\epsilon A_1A_2A_4\right)
\\
&=A_0+\epsilon \left(A_0A_1+ A_1A_0\right)
=A_0+\epsilon A_1.
\end{align*}
Therefore, the dual $r$-rank decomposition of $\widehat{A}$ exists and
$\widehat{A}=
 \left(A_2+\epsilon A_1A_2\right)\left(A_4+\epsilon A_4A_1\right)$
 is a dual $r$-rank decomposition of $\widehat{A}$.
\end{proof}

\begin{theorem}
Let
 $\widehat{A}= A_0+\epsilon A_1\in\mathbb{D}_{n\times n}$ be a dual idempotent matrix.
Then
\begin{align}
\label{3.112}
\widehat{A}^{\dag}
&
=
A_0^{\dag}+\epsilon\left(A_0^{\dag}A_1^{T}+A_1^{T}A_0^{\dag}-A_0^{\dag}\left(A_1 +A_1^{T}\right)A_0 A_0^{\dag}-A_0^{\dag}
A_0\left( A_1^{T}+A_1\right)A_0^{\dag}\right).
\end{align}
\end{theorem}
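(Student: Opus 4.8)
The plan is to derive (\ref{3.112}) by feeding the explicit dual $r$-rank decomposition of a dual idempotent matrix into the DMPGI formula (\ref{2.18}) of Theorem \ref{2.7}. Write ${\rm rk}(A_0)=r$ and let $A_0=A_2A_4$ be any full rank decomposition of the real part. By Theorem \ref{3.0}, $\widehat{A}=\widehat{A_1}\widehat{A_2}$ with $\widehat{A_1}=A_2+\epsilon A_1A_2$ and $\widehat{A_2}=A_4+\epsilon A_4A_1$ is a dual $r$-rank decomposition of $\widehat{A}$; hence in the notation of Theorem \ref{2.7} we have $A_3=A_1A_2$ and $A_5=A_4A_1$, and it suffices to simplify the right-hand side of (\ref{2.18}) for this choice. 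The idempotency hypothesis enters only through the validity of this particular decomposition (i.e. through Theorem \ref{3.0}).

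First I would record the two symmetric quadratic terms. Since $A_3^{T}=A_2^{T}A_1^{T}$ and $A_5^{T}=A_1^{T}A_4^{T}$, a one-line computation gives $Q_{A_2,A_3}^{S}=A_2^{T}(A_1+A_1^{T})A_2$ and $Q_{A_4^{T},A_5^{T}}^{S}=A_4(A_1+A_1^{T})A_4^{T}$. Substituting these into (\ref{2.18}) together with the Lemma \ref{2.5} identities $A_2^{\dag}=(A_2^{T}A_2)^{-1}A_2^{T}$ and $A_4^{\dag}=A_4^{T}(A_4A_4^{T})^{-1}$, the real part collapses to $A_4^{\dag}A_2^{\dag}$, which equals $A_0^{\dag}$ by the full rank factorization identity (cf. (\ref{20220216-2})). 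For the dual part, each of the four summands telescopes: $A_4^{\dag}(A_2^{T}A_2)^{-1}A_3^{T}$ becomes $A_4^{\dag}A_2^{\dag}A_1^{T}=A_0^{\dag}A_1^{T}$; the $Q_{A_2,A_3}^{S}$ summand becomes $-A_0^{\dag}(A_1+A_1^{T})A_2A_2^{\dag}$; the summand $A_5^{T}(A_4A_4^{T})^{-1}A_2^{\dag}$ becomes $A_1^{T}A_4^{\dag}A_2^{\dag}=A_1^{T}A_0^{\dag}$; and the $Q_{A_4^{T},A_5^{T}}^{S}$ summand becomes $-A_4^{\dag}A_4(A_1+A_1^{T})A_0^{\dag}$.

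Finally I would invoke (\ref{20220216-7}), namely $A_2A_2^{\dag}=A_0A_0^{\dag}$ and $A_4^{\dag}A_4=A_0^{\dag}A_0$, to rewrite the second summand as $-A_0^{\dag}(A_1+A_1^{T})A_0A_0^{\dag}$ and the fourth as $-A_0^{\dag}A_0(A_1+A_1^{T})A_0^{\dag}$; collecting the four pieces reproduces (\ref{3.112}) verbatim. There is no genuine obstacle — the argument is pure bookkeeping — but the step requiring the most care is tracking which one-sided inverse absorbs which adjacent factor (recognizing $(A_2^{T}A_2)^{-1}A_2^{T}$ or $A_4^{T}(A_4A_4^{T})^{-1}$ inside a longer product as $A_2^{\dag}$ or $A_4^{\dag}$) while resisting the temptation to collapse $A_2A_2^{\dag}$ or $A_4^{\dag}A_4$ to an identity, since $A_2,A_4$ are only one-sidedly invertible. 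As a consistency check one could instead start from the general formula (\ref{The-DMPGI}) and simplify using $A_0=A_0^{2}$ and $A_1=A_0A_1+A_1A_0$ from Theorem \ref{3.1}, but that route must contend with $(A_0^{T}A_0)^{\dag}$ and $(A_0A_0^{T})^{\dag}$, which do not telescope as cleanly, so I would keep it only as a verification and present the dual $r$-rank decomposition route as the main proof.
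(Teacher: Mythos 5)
Your proposal is correct and follows essentially the same route as the paper: both rest on Theorem \ref{3.0} to fix the decomposition $\widehat{A}=(A_2+\epsilon A_1A_2)(A_4+\epsilon A_4A_1)$, i.e.\ $A_3=A_1A_2$ and $A_5=A_4A_1$, and then on the identities $A_4^{\dag}A_2^{\dag}=A_0^{\dag}$, $A_2A_2^{\dag}=A_0A_0^{\dag}$, $A_4^{\dag}A_4=A_0^{\dag}A_0$ to collapse the dual part. The only difference is that you substitute into the already-expanded formula (\ref{2.18}) of Theorem \ref{2.7}, whereas the paper substitutes into (\ref{2.17-1}) and inverts the dual Gram matrices $\widehat{A_1}^{T}\widehat{A_1}$ and $\widehat{A_2}\widehat{A_2}^{T}$ directly; since (\ref{2.18}) is itself derived from (\ref{2.17-1}), this is the same computation packaged one step later.
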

\begin{proof}
If $\widehat{A}$ is a dual idempotent matrix,
according to Theorem \ref{3.0},
the dual $r$-rank decomposition of $\widehat{A}$ exists.
Let $A_0=A_2A_4$ be a full rank decomposition of $A_0$
and
$\widehat{A}=\widehat{A_1}\widehat{A_2}$
be a dual $r$-rank decomposition of $\widehat{A}$
where
$\widehat{A_1}=A_2+\epsilon A_1A_2$
and
$\widehat{A_2}=A_4+\epsilon A_4A_1$.
Because $\widehat{A_1}$ is an $r$-column full rank  dual matrix and $\widehat{A_2}$ is an $r$-row full rank dual  matrix,
then
\begin{align}
\nonumber
\left\{
\begin{aligned}
\left(\widehat{A_1}^{T}\widehat{A_1}\right)^{-1}
&
=
(A_2^{T}A_2)^{-1}-\epsilon \left(A_2^{\dag}(A_1^{T}+A_1)\left(A_2^{\dag}\right)^{T}\right) \\
\nonumber
\left(\widehat{A_2}\widehat{A_2}^{T}\right)^{-1}
&
=
(A_4A_4^{T})^{-1}-\epsilon \left(\left(A_4^{\dag}\right)^{T}(A_1^{T}+A_1)A_4^{\dag}\right)
\end{aligned}\right..
\end{align}
By applying   (\ref{2.20-1}),  (\ref{2.20-2}), (\ref{20220216-7})
and the above equations  to  (\ref{2.17-1})
\begin{align}
\nonumber
\widehat{A}^{\dag}
&=
A_0^{\dag}+\epsilon\left(A_0^{\dag}A_1^{T}-A_0^{\dag}\left(A_1A_2+A_1^{T}A_2\right)A_2^{\dag}+A_1^{T}A_0^{\dag}-A_4^{\dag}
\left(A_4A_1^{T}+A_4A_1\right)A_0^{\dag}\right)
\\
\nonumber
&=
A_0^{\dag}+\epsilon\left(A_0^{\dag}A_1^{T}-A_0^{\dag}\left(A_1 +A_1^{T}\right)A_2 A_2^{\dag}+A_1^{T}A_0^{\dag}-A_4^{\dag}
A_4\left( A_1^{T}+A_1\right)A_0^{\dag}\right)
\\
\nonumber
&=
A_0^{\dag}+\epsilon\left(A_0^{\dag}A_1^{T}+A_1^{T}A_0^{\dag}-A_0^{\dag}\left(A_1 +A_1^{T}\right)A_0 A_0^{\dag}-A_0^{\dag}
A_0\left( A_1^{T}+A_1\right)A_0^{\dag}\right).
\end{align}
Therefore,
we get (\ref{3.112}).
\end{proof}

\begin{theorem}
\label{3.2}
Let
 $\widehat{A}\in\mathbb{D}_{n\times n}$,
 $\widehat{A}= A_0+\epsilon A_1$ and
${\rm rk}(A_0)=r$.
Let $\widehat{A}=\widehat{A_1}\widehat{A_2}$
       be a dual r-rank decomposition of $\widehat{A}$.
Then   $\widehat{A}$ is a dual idempotent matrix
if and only if
$\widehat{A_2}\widehat{A_1}=I_r$.
\end{theorem}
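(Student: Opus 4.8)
The plan is to use the associativity rearrangement
$\widehat{A}^{2}=\left(\widehat{A_1}\widehat{A_2}\right)\left(\widehat{A_1}\widehat{A_2}\right)=\widehat{A_1}\left(\widehat{A_2}\widehat{A_1}\right)\widehat{A_2}$
together with the fact that the two factors of a dual $r$-rank decomposition are one-sided invertible. Concretely, since $\widehat{A_1}$ is an $r$-column full rank dual matrix and $\widehat{A_2}$ is an $r$-row full rank dual matrix, their real parts have rank $r$, so by Lemma \ref{2.5} the dual matrices $\widehat{A_1}^{T}\widehat{A_1}$ and $\widehat{A_2}\widehat{A_2}^{T}$ are invertible, $\widehat{A_1}^{\dag}=\left(\widehat{A_1}^{T}\widehat{A_1}\right)^{-1}\widehat{A_1}^{T}$, $\widehat{A_2}^{\dag}=\widehat{A_2}^{T}\left(\widehat{A_2}\widehat{A_2}^{T}\right)^{-1}$, and hence $\widehat{A_1}^{\dag}\widehat{A_1}=I_r$ and $\widehat{A_2}\widehat{A_2}^{\dag}=I_r$. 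These two one-sided identities are the engine of the argument.

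For the sufficiency ($\Leftarrow$), suppose $\widehat{A_2}\widehat{A_1}=I_r$. Then the rearrangement above gives $\widehat{A}^{2}=\widehat{A_1}\left(\widehat{A_2}\widehat{A_1}\right)\widehat{A_2}=\widehat{A_1}I_r\widehat{A_2}=\widehat{A_1}\widehat{A_2}=\widehat{A}$, so by Definition \ref{3.1-1} the dual matrix $\widehat{A}$ is idempotent; this direction is immediate and needs nothing beyond the rearrangement.

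For the necessity ($\Rightarrow$), assume $\widehat{A}^{2}=\widehat{A}$, i.e. $\widehat{A_1}\left(\widehat{A_2}\widehat{A_1}\right)\widehat{A_2}=\widehat{A_1}\widehat{A_2}$. Multiply this equation on the left by $\widehat{A_1}^{\dag}$ and on the right by $\widehat{A_2}^{\dag}$. Using $\widehat{A_1}^{\dag}\widehat{A_1}=I_r$ and $\widehat{A_2}\widehat{A_2}^{\dag}=I_r$, the left-hand side collapses to $\widehat{A_2}\widehat{A_1}$ and the right-hand side collapses to $\widehat{A_1}^{\dag}\widehat{A_1}\widehat{A_2}\widehat{A_2}^{\dag}=I_r$, yielding $\widehat{A_2}\widehat{A_1}=I_r$, as desired. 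The main point to be careful about is precisely the legitimacy of these one-sided cancellations in the dual-number ring, namely that $\widehat{A_1}^{\dag}$ and $\widehat{A_2}^{\dag}$ exist and satisfy $\widehat{A_1}^{\dag}\widehat{A_1}=I_r$, $\widehat{A_2}\widehat{A_2}^{\dag}=I_r$; this is guaranteed by Lemma \ref{2.5} exactly because the definition of a dual $r$-rank decomposition forces $\mathrm{rk}(A_2)=\mathrm{rk}(A_4)=r$. Everything else is a short manipulation.
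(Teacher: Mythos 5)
Your proof is correct, and the sufficiency direction coincides with the paper's. For the necessity direction, however, you take a genuinely different (and arguably cleaner) route. The paper works componentwise: it first uses $A_0^2=A_0$ together with the full column/row rank of $A_2$ and $A_4$ to conclude $A_4A_2=I_r$, writes $\widehat{A_2}\widehat{A_1}=I_r+\epsilon Z$, expands $\widehat{A_1}\widehat{A_2}\widehat{A_1}\widehat{A_2}$ in real and dual parts to extract the relation $A_2ZA_4=0$, and then cancels $A_2$ and $A_4$ to get $Z=0$. You instead stay entirely at the level of dual matrices: you invoke Lemma \ref{2.5} to produce a left inverse $\widehat{A_1}^{\dag}=\left(\widehat{A_1}^{T}\widehat{A_1}\right)^{-1}\widehat{A_1}^{T}$ of $\widehat{A_1}$ and a right inverse $\widehat{A_2}^{\dag}=\widehat{A_2}^{T}\left(\widehat{A_2}\widehat{A_2}^{T}\right)^{-1}$ of $\widehat{A_2}$ (legitimate because $A_2^{T}A_2$ and $A_4A_4^{T}$ are invertible real matrices, so the dual matrices $\widehat{A_1}^{T}\widehat{A_1}$ and $\widehat{A_2}\widehat{A_2}^{T}$ are invertible), and then cancel in one step from $\widehat{A_1}\left(\widehat{A_2}\widehat{A_1}\right)\widehat{A_2}=\widehat{A_1}\widehat{A_2}$. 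Both arguments ultimately rest on the same fact --- the one-sided invertibility forced by $\mathrm{rk}(A_2)=\mathrm{rk}(A_4)=r$ --- but your version avoids the real/dual-part bookkeeping and the intermediate step $A_4A_2=I_r$, at the cost of leaning on Lemma \ref{2.5}; the paper's version is more elementary in that it only ever cancels real full-rank matrices. Either way the theorem is proved.
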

\begin{proof}
$"\Rightarrow"$:
Let $\widehat{A}$ be  a dual idempotent matrix,
then the dual $r$-rank decomposition of $\widehat{A}$ exists.
Let $A_0=A_2A_4$ be a full rank decomposition of $A_0$,
and
$\widehat{A}=\widehat{A_1}\widehat{A_2}=(A_2+\epsilon Y)(A_4+\epsilon X)$
be a dual $r$-rank decomposition of $\widehat{A}$.
Since
$\widehat{A}$ is a  dual idempotent matrix,
 by the first equation in   (\ref{3.11}),
 we see that   $A_0$ is an idempotent matrix,
and $A_4A_2=I_r$.
Therefore,
\begin{align}
\label{20220216-9}
\widehat{A_2}\widehat{A_1}=I_r+\epsilon Z.
\end{align}

Because $\widehat{A}$ is a dual idempotent matrix,
we have
$\widehat{A_1}\widehat{A_2}\widehat{A_1}\widehat{A_2}
=\widehat{A_1}\widehat{A_2}$,
\begin{align}
\nonumber
\widehat{A_1}\widehat{A_2}
=
(A_2+\epsilon Y)(A_4+\epsilon X)
=
A_2A_4+\epsilon(A_2X+YA_4)
\end{align}
and
\begin{align}
\nonumber
\widehat{A_1}\widehat{A_2}\widehat{A_1}\widehat{A_2}
=
(A_2+\epsilon Y)(I_r+\epsilon Z)(A_4+\epsilon X)
=A_2A_4+\epsilon(A_2X+A_2ZA_4+YA_4).
\end{align}
Therefore,
$A_2ZA_4=0$.
Since
$A_2$ is a column full rank matrix
and
$A_4$ is a row full  rank matrix,
$Z=0$.
It follows   from (\ref{20220216-9}) that  $\widehat{A_2}\widehat{A_1}=I_r$.

$"\Leftarrow"$:
Let $\widehat{A_2}\widehat{A_1}=I_r$.
Then
$\widehat{A}^{2}=\widehat{A_1}\widehat{A_2}\widehat{A_1}\widehat{A_2}
=\widehat{A_1}I_r\widehat{A_2}=\widehat{A_1}\widehat{A_2}=\widehat{A}$,
that is, $\widehat{A}$ is a dual idempotent matrix.
\end{proof}

\subsection{Dual EP Matrix}
This subsection introduces one special dual matrix: dual EP matrix,
and
considers characterizations, dual $r$-rank decomposition and DMPGI of the special matrix.

\begin{definition}
\label{3.3}
Let  $\widehat{A}\in\mathbb{D}_{n\times n}$,
and
$\widehat{A}^{\dag}$  exist.
If
\begin{align}
\label{2.21}
\widehat{A}\widehat{A}^{\dag}=\widehat{A}^{\dag}\widehat{A},
\end{align}
then $\widehat{A}$ is called a dual EP matrix.
\end{definition}

\begin{theorem}
\label{3.4}
Let
 $\widehat{A}\in\mathbb{D}_{n\times n}$,
 $\widehat{A}= A_0+\epsilon A_1$ and
${\rm rk}(A_0)=r$.
  Let $\widehat{A}=\widehat{A_1}\widehat{A_2}$ be a dual $r$-rank decomposition of $\widehat{A}$.
Then $\widehat{A}$ is a dual EP matrix if and only if
\begin{align}
\label{2.22}
\widehat{A_1}\widehat{A_1}^{\dag}=\widehat{A_2}^{\dag}\widehat{A_2}.
\end{align}
\end{theorem}

\begin{proof}
$"\Rightarrow"$:
Since the dual $r$-rank decomposition of $\widehat{A}$ exists,
  the DMPGI of $\widehat{A}$ exists.
Let
$\widehat{A}=\widehat{A_1}\widehat{A_2}$ be the dual $r$-rank decomposition of $\widehat{A}$,
and
$\widehat{A}$ be a dual EP matrix.
According to Definition \ref{3.3},
we can get the equation ({\ref{2.21}}).
Then by applying (\ref{2.17-1})
 to   ({\ref{2.21}}),
we  get
\begin{align}
\nonumber
\widehat{A_1}\widehat{A_2}\widehat{A_2}^{T}\left(\widehat{A_2}\widehat{A_2}^{T}\right)^{-1}
\left(\widehat{A_1}^{T}\widehat{A_1}\right)^{-1}\widehat{A_1}^{T}
&=\widehat{A_2}^{T}\left(\widehat{A_2}\widehat{A_2}^{T}\right)^{-1}\left(\widehat{A_1}^{T}
\widehat{A_1}\right)^{-1}\widehat{A_1}^{T}\widehat{A_1}\widehat{A_2},
\end{align}
that is,
$\widehat{A_1}\left(\widehat{A_1}^{T}\widehat{A_1}\right)^{-1}\widehat{A_1}^{T}
=\widehat{A_2}^{T}\left(\widehat{A_2}\widehat{A_2}^{T}\right)^{-1}\widehat{A_2}$.
It follows from (\ref{2.20-1}) and  (\ref{2.20-2}) that
 we obtain  (\ref{2.22}).

$"\Leftarrow"$:
Conversely,
with the precondition that $\widehat{A_1}$ is an $r$-column full rank  dual matrix and $\widehat{A_2}$ is an $r$-row full rank  dual matrix,
if the equation (\ref{2.22}) holds,
according to Lemma \ref{2.5},
we have
$\widehat{A_1}^{\dag}=\left(\widehat{A_1}^{T}\widehat{A_1}\right)^{-1}\widehat{A_1}^{T}$
and
$\widehat{A_2}^{\dag}
=\widehat{A_2}^{T}\left(\widehat{A_2}\widehat{A_2}^{T}\right)^{-1}$.
Then applying these two equations to the equation (\ref{2.22}),
we  get
$\widehat{A_1}\left(\widehat{A_1}^{T}\widehat{A_1}\right)^{-1}\widehat{A_1}^{T}
=\widehat{A_2}^{T}\left(\widehat{A_2}\widehat{A_2}^{T}\right)^{-1}
\widehat{A_2}$.
Therefore,
 $$\widehat{A_1}\widehat{A_2}\widehat{A_2}^{T}\left(\widehat{A_2}\widehat{A_2}^{T}\right)^{-1}
 \left(\widehat{A_1}^{T}\widehat{A_1}\right)^{-1}\widehat{A_1}^{T}
=\widehat{A_2}^{T}
\left(\widehat{A_2}\widehat{A_2}^{T}\right)^{-1}\left(\widehat{A_1}^{T}
\widehat{A_1}\right)^{-1}\widehat{A_1}^{T}\widehat{A_1}\widehat{A_2}.$$
Hence,
the equation ({\ref{2.21}}) holds, that is, $\widehat{A}$ is a dual EP matrix.
\end{proof}

\begin{theorem}
\label{3.5-0}
Let
 $\widehat{A}\in\mathbb{D}_{n\times n}$,
 $\widehat{A}= A_0+\epsilon A_1$,
and the DMPGI of $\widehat{A}$ exist.
Then $\widehat{A}$ is a dual EP matrix if and only if
\begin{subnumcases}{}
\label{3.5-0-1}
 A_0 A_0^\dag =A_0^\dag A_0,
\\
\label{3.5-0-2}
 \left(I_n-A_0^\dag A_0\right)A_1A_0^\dag
=\left(A_0^\dag A_1\left(I_n-A_0^\dag A_0\right)\right)^T.
 \end{subnumcases}
\end{theorem}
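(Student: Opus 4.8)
The plan is to use the explicit formula for $\widehat{A}^{\dag}$ from Lemma~1 (equation~(\ref{The-DMPGI})) and to compute $\widehat{A}\widehat{A}^{\dag}$ and $\widehat{A}^{\dag}\widehat{A}$ directly, then compare real and dual parts. Since the DMPGI of $\widehat{A}$ exists, write $\widehat{A}^{\dag}=A_0^{\dag}+\epsilon B$, where
\[
B=-\left(A_0^{\dag}A_1A_0^{\dag}-\left(A_0^{T}A_0\right)^{\dag}A_1^{T}\left(I_n-A_0A_0^{\dag}\right)-\left(I_n-A_0^{\dag}A_0\right)A_1^{T}\left(A_0A_0^{T}\right)^{\dag}\right).
\]
First I would expand $\widehat{A}\widehat{A}^{\dag}=A_0A_0^{\dag}+\epsilon\left(A_0B+A_1A_0^{\dag}\right)$ and $\widehat{A}^{\dag}\widehat{A}=A_0^{\dag}A_0+\epsilon\left(BA_0+A_0^{\dag}A_1\right)$. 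The equation $\widehat{A}\widehat{A}^{\dag}=\widehat{A}^{\dag}\widehat{A}$ thus splits into the real-part condition $A_0A_0^{\dag}=A_0^{\dag}A_0$, which is exactly (\ref{3.5-0-1}), and the dual-part condition $A_0B+A_1A_0^{\dag}=BA_0+A_0^{\dag}A_1$.

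The core of the argument is to show, under the standing hypothesis that (\ref{3.5-0-1}) already holds (we may assume it once we are proving the equivalence of the dual parts, since without it $\widehat{A}$ is certainly not EP), that the identity $A_0B+A_1A_0^{\dag}=BA_0+A_0^{\dag}A_1$ is equivalent to (\ref{3.5-0-2}). When $A_0A_0^{\dag}=A_0^{\dag}A_0$, the matrix $A_0$ is an EP matrix, so $\left(A_0^{T}A_0\right)^{\dag}=A_0^{\dag}\left(A_0^{\dag}\right)^{T}$ and $\left(A_0A_0^{T}\right)^{\dag}=\left(A_0^{\dag}\right)^{T}A_0^{\dag}$, and moreover $A_0A_0^{\dag}$ commutes with $A_0$ and with $A_0^{\dag}$. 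Using these I would multiply $A_0B+A_1A_0^{\dag}-BA_0-A_0^{\dag}A_1=0$ on the left by $A_0^{\dag}A_0$ and on the right by $I_n-A_0^{\dag}A_0$ (and symmetrically), exploiting that $A_0A_0^{\dag}A_0=A_0$, $A_0^{\dag}A_0A_0^{\dag}=A_0^{\dag}$, and that the three summands of $B$ live in complementary row/column spaces determined by the projectors $A_0^{\dag}A_0$ and $A_0A_0^{\dag}$. The terms $A_0^{\dag}A_1A_0^{\dag}$ cancel from the two sides, and the "cross" blocks $\left(I_n-A_0^{\dag}A_0\right)A_1A_0^{\dag}$ and $A_0^{\dag}A_1\left(I_n-A_0^{\dag}A_0\right)$ are precisely the pieces that survive; setting the surviving block equal to zero and transposing gives (\ref{3.5-0-2}). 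Conversely, substituting (\ref{3.5-0-1})--(\ref{3.5-0-2}) back into $A_0B+A_1A_0^{\dag}-BA_0-A_0^{\dag}A_1$ and simplifying with the EP identities shows it vanishes, so $\widehat{A}\widehat{A}^{\dag}=\widehat{A}^{\dag}\widehat{A}$ and $\widehat{A}$ is dual EP by Definition~\ref{3.3}.

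The main obstacle I anticipate is the bookkeeping in the dual-part calculation: $B$ has three summands, each a product of up to four matrices, and one must carefully track how the orthogonal projectors $A_0A_0^{\dag}=A_0^{\dag}A_0$ interact with $A_1$, $A_1^{T}$, and $\left(A_0^{\dag}\right)^{T}$ after invoking the EP-identities for $\left(A_0^{T}A_0\right)^{\dag}$ and $\left(A_0A_0^{T}\right)^{\dag}$. A cleaner alternative, which I would use if the direct route gets unwieldy, is to invoke Theorem~\ref{3.4}: take a dual $r$-rank decomposition $\widehat{A}=\widehat{A_1}\widehat{A_2}$ (available by Theorem~\ref{2.4}), translate the dual-EP condition into $\widehat{A_1}\widehat{A_1}^{\dag}=\widehat{A_2}^{\dag}\widehat{A_2}$, expand both sides into real and dual parts using Lemma~\ref{2.5}, and read off the two scalar conditions; the real parts give $A_2A_2^{\dag}=A_4^{\dag}A_4$, i.e. (\ref{3.5-0-1}) via (\ref{20220216-7}), and the dual parts, after using the formulas (\ref{20210210-1}) for $A_3,A_5$ with $P=0$, reduce to (\ref{3.5-0-2}).
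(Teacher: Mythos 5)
Your proposal follows essentially the same route as the paper: substitute the explicit DMPGI formula (\ref{The-DMPGI}) into $\widehat{A}\widehat{A}^{\dag}=\widehat{A}^{\dag}\widehat{A}$, read off (\ref{3.5-0-1}) from the real part, and then, using that $A_0$ is EP, reduce the dual-part identity to (\ref{3.5-0-2}). The only difference is in the finishing move, and it is minor: where you isolate the surviving cross terms by multiplying with the complementary projectors $A_0^{\dag}A_0$ and $I_n-A_0^{\dag}A_0$, the paper writes $A_0=U\,\mathrm{diag}(T,0)\,U^{T}$ with $U$ orthogonal and kills the surviving off-diagonal block by an antisymmetry argument; both finishes are valid.
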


\begin{proof}
By applying (\ref{The-DMPGI}) and Definition \ref{3.3},
  we can get that $\widehat{A}$ is a dual EP matrix if and only if
\begin{align}
\label{20220210-1}
\left(A_0+\epsilon A_1\right)   \left(A_0^\dag- \epsilon R\right)
=
\left(A_0^\dag- \epsilon R\right)   \left(A_0+\epsilon A_1\right),
\end{align}
in which $R=A_0^{\dag}A_1A_0^{\dag}
 -\left(A_0^{T}A_0\right)^{\dag}A_1^{T}\left(I_n-A_0A_0^{\dag}\right)
 -\left(I_n-A_0^{\dag}A_0\right)A_1^{T}\left(A_0A_0^{T}\right)^{\dag}$.

$"\Rightarrow"$:
Let $\widehat{A}$ be a dual EP matrix.
By applying (\ref{20220210-1}),
we see that
 \begin{align}
\label{20220210-2}
A_0A_0^\dag+\epsilon  \left( A_1A_0^\dag-A_0R\right)=
A_0^\dag A_0  + \epsilon  \left( A_0^\dag A_1-RA_0\right).
\end{align}
Therefore,
we get (\ref{3.5-0-1})
and
 \begin{align}
\label{20220210-8}
A_1A_0^\dag-A_0R=A_0^\dag A_1-RA_0.
\end{align}

Since  $A_0A_0^\dag=A_0^\dag A_0$,
  $A_0$ is EP.
Then there exists an orthogonal matrix $U$ such that
 \begin{align}
\label{20220210-3}
A_0=U\left[\begin{matrix} T&0\\ 0&0 \end{matrix}  \right]U^{T},
\end{align}
where $T\in \mathbb{R}_{r\times r}$ is a nonsingular matrix.
It is easy to check that
 \begin{align}
\label{20220210-4}
\left(A_0A_0^{T}\right)^{\dag} A_0 =\left( A_0^{T}\right)^{\dag}.
\end{align}

By applying (\ref{20220210-4}) and  $A_0A_0^\dag=A_0^\dag A_0$,
we see that
 \begin{align}
 \nonumber
A_1A_0^\dag-A_0R
&
=
A_1A_0^\dag- A_0A_0^{\dag}A_1A_0^{\dag}
 +A_0\left(A_0^{T}A_0\right)^{\dag}A_1^{T}
 \left(I_n-A_0A_0^{\dag}\right),
 \\
\label{20220210-5}
 &
 =
\left(I_n- A_0A_0^{\dag}\right)A_1A_0^{\dag}
 + \left(A_0^{T} \right)^{\dag}A_1^{T}\left(I_n-A_0A_0^{\dag}\right),
\end{align}
and
 \begin{align}
 \nonumber
 A_0^\dag A_1-RA_0
&
 =A_0^\dag A_1- A_0^{\dag}A_1A_0^{\dag} A_0
+\left(I_n-A_0^{\dag}A_0\right)A_1^{T}\left(A_0A_0^{T}\right)^{\dag} A_0
\\
\label{20220210-6}
&
 =A_0^\dag A_1\left(I_n- A_0A_0^{\dag} \right)
+\left(I_n-A_0A_0^{\dag}\right)A_1^{T}\left( A_0^{T}\right)^{\dag}.
\end{align}
By substituting (\ref{20220210-5}) and  (\ref{20220210-6})  into
(\ref{20220210-8})
we get
 \begin{align}
 \label{20220210-9}
\left(I_n- A_0A_0^{\dag}\right)\left(A_1A_0^{\dag}-A_1^{T}\left( A_0^{T}\right)^{\dag}\right)
&
 =\left(A_0^\dag A_1 - \left(A_0^{T} \right)^{\dag}
 A_1^{T}\right) \left(I_n- A_0A_0^{\dag} \right).
\end{align}
It is obvious that
$\left(I_n- A_0A_0^{\dag}\right)\left(A_1A_0^{\dag}
-
A_1^{T}\left( A_0^{T}\right)^{\dag}\right)$ is an antisymmetric matrix.

Furthermore, write  \begin{align}
\label{20220210-7}
A_1
=
U\left[\begin{matrix} A_{11}& A_{12}\\  A_{21}& A_{22} \end{matrix}  \right]U^{T},
\end{align}
where $A_{11}\in \mathbb{R}_{r\times r}$.
By applying (\ref{20220210-3}) and (\ref{20220210-7}),
we get
\begin{align}
\nonumber
&\left(I_n- A_0A_0^{\dag}\right)\left(A_1A_0^{\dag}
-
A_1^{T}\left( A_0^{T}\right)^{\dag}\right)
\\
\nonumber
&=
U\left[\begin{matrix} 0&0\\ 0&I_{n-r} \end{matrix}  \right]U^T\left(A_1U\left[\begin{matrix} T^{-1}&0\\ 0&0 \end{matrix}  \right]U^{T}
-
A_1^{T}U\left[\begin{matrix} \left(T^{T}\right)^{-1}&0\\ 0&0 \end{matrix}  \right]U^{T} \right)
\\
\nonumber
&=
U\left[\begin{matrix} 0&0\\ A_{21}T^{-1}-A_{21}^{T} \left(T^{T}\right)^{-1}&0\end{matrix}  \right]U^{T}.
\end{align}
Since
it is an antisymmetric matrix and $A_0A_0^\dag=A_0^\dag A_0$,
it is  obvious  that
\begin{align}
\label{20220210-10}
\left(I_n- A_0A_0^{\dag}\right)\left(A_1A_0^{\dag}-A_1^{T}\left( A_0^{T}\right)^{\dag}\right)=0.
\end{align}
Therefore,  we get (\ref{3.5-0-2}).

$"\Leftarrow"$:  Conversely,
from (\ref{3.5-0-1}),
we get that  $\left(A_0A_0^{T}\right)^{\dag} A_0 =\left( A_0^{T}\right)^{\dag}$,
$A_0$ is EP and  $A_0$ has the decomposition (\ref{20220210-3}).
From (\ref{3.5-0-2}), we have (\ref{20220210-10}).
Therefore, we get (\ref{20220210-9}).

By applying (\ref{3.5-0-1}),
 (\ref{20220210-9})
 and
 $\left(A_0A_0^{T}\right)^{\dag} A_0 =\left( A_0^{T}\right)^{\dag}$,
 we have (\ref{20220210-2}) and (\ref{20220210-8}).
Therefore, we get (\ref{20220210-1}), that is, $\widehat{A}$ is a dual EP matrix.
\end{proof} 

\begin{theorem}
\label{3.5}
Let
 $\widehat{A}\in\mathbb{D}_{n\times n}$,
 $\widehat{A}= A_0+\epsilon A_1$ and
${\rm rk}(A_0)=r$.
Let $A_0=A_2A_4$ be a full rank decomposition of $A_0$.
If the dual $r$-rank decomposition of $\widehat{A}$ exists,
let $\widehat{A}=\widehat{A_1}\widehat{A_2}$ be a dual $r$-rank decomposition of $\widehat{A}$
where $\widehat{A_1}=A_2+\epsilon A_3 \in \mathbb{D}_{n\times r}$ and $\widehat{A_2}=A_4+\epsilon A_5 \in \mathbb{D}_{r\times n}$.
then $\widehat{A}$ is a dual EP matrix if and only if
\begin{subnumcases}{}
\label{2.23}
A_2\left(A_2^{T}A_2\right)^{-1}A_2^{T}=A_4^{T}\left(A_4A_4^{T}\right)^{-1}A_4
\\
\label{2.24}
\left(I_n- A_4^{T}\left(A_4A_4^{T}\right)^{-1}A_4\right)A_3A_2^{\dag}
=
\left(A_4^{\dag}A_5\left(I_n- A_4^{T}\left(A_4A_4^{T}\right)^{-1}A_4\right)\right)^T.
 \end{subnumcases}
\end{theorem}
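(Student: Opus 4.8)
The plan is to reduce the statement to the factor-wise criterion of Theorem \ref{3.4} and then unpack it with the explicit one-sided inverse formulas of Lemma \ref{2.5}. Since the dual $r$-rank decomposition $\widehat{A}=\widehat{A_1}\widehat{A_2}$ exists, the DMPGI of $\widehat{A}$ exists by Theorem \ref{2.4}, so by Theorem \ref{3.4} the matrix $\widehat{A}$ is a dual EP matrix if and only if $\widehat{A_1}\widehat{A_1}^{\dag}=\widehat{A_2}^{\dag}\widehat{A_2}$. As $\widehat{A_1}=A_2+\epsilon A_3$ is an $r$-column full rank dual matrix and $\widehat{A_2}=A_4+\epsilon A_5$ is an $r$-row full rank dual matrix, Lemma \ref{2.5} expresses $\widehat{A_1}^{\dag}$ and $\widehat{A_2}^{\dag}$ explicitly, with their real and dual parts written out in terms of $A_2,A_3,A_4,A_5$ and the $Q^{S}$ terms.

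First I would form the dual matrices $\widehat{A_1}\widehat{A_1}^{\dag}$ and $\widehat{A_2}^{\dag}\widehat{A_2}$ from those formulas. Their real parts are $A_2(A_2^{T}A_2)^{-1}A_2^{T}$ and $A_4^{T}(A_4A_4^{T})^{-1}A_4$, so equality of the real parts is exactly (\ref{2.23}). Assuming (\ref{2.23}), set $P:=A_2A_2^{\dag}=A_2(A_2^{T}A_2)^{-1}A_2^{T}=A_4^{T}(A_4A_4^{T})^{-1}A_4=A_4^{\dag}A_4$, which is a symmetric idempotent. Using the Penrose equations satisfied by $A_2^{\dag}$ and $A_4^{\dag}$ (in particular $A_2^{\dag}A_2A_2^{\dag}=A_2^{\dag}$, $A_4^{\dag}A_4A_4^{\dag}=A_4^{\dag}$, and the symmetry of $A_2A_2^{\dag}$ and $A_4^{\dag}A_4$), the dual part of $\widehat{A_1}\widehat{A_1}^{\dag}$ simplifies to $M_1+M_1^{T}$ with $M_1=(I_n-P)A_3A_2^{\dag}$, and the dual part of $\widehat{A_2}^{\dag}\widehat{A_2}$ simplifies to $M_2+M_2^{T}$ with $M_2=A_4^{\dag}A_5(I_n-P)$. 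Since $I_n-P=I_n-A_4^{T}(A_4A_4^{T})^{-1}A_4$, the matrix $M_1$ is the left-hand side of (\ref{2.24}) and $M_2^{T}$ is its right-hand side, so it remains to show that, given (\ref{2.23}), the identity $M_1+M_1^{T}=M_2+M_2^{T}$ is equivalent to $M_1=M_2^{T}$.

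For this I would note that $A_2^{\dag}=A_2^{\dag}P$ and $A_4^{\dag}=PA_4^{\dag}$ (again from (\ref{2.23}) and the Penrose equations), so $M_1=(I_n-P)M_1P$ and $M_2=PM_2(I_n-P)$; hence $(I_n-P)M_1^{T}P=0$ and $(I_n-P)M_2P=0$, while $(I_n-P)M_1P=M_1$ and $(I_n-P)M_2^{T}P=M_2^{T}$. Multiplying $M_1+M_1^{T}=M_2+M_2^{T}$ on the left by $I_n-P$ and on the right by $P$ therefore gives $M_1=M_2^{T}$; conversely $M_1=M_2^{T}$ forces $M_1^{T}=M_2$ and hence the full dual-part equality. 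Together with the identification of the real parts, this shows that $\widehat{A}$ is a dual EP matrix if and only if (\ref{2.23}) holds and $M_1=M_2^{T}$, i.e. (\ref{2.23}) and (\ref{2.24}) hold.

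The main obstacle is organizational rather than conceptual: (\ref{2.23}) must be extracted from the real parts before the short block-decomposition argument reducing $M_1+M_1^{T}=M_2+M_2^{T}$ to $M_1=M_2^{T}$ becomes legitimate, since that reduction rests on $A_2A_2^{\dag}=A_4^{\dag}A_4$; and collapsing the dual parts of $\widehat{A_1}\widehat{A_1}^{\dag}$ and $\widehat{A_2}^{\dag}\widehat{A_2}$ to the compact forms $M_1+M_1^{T}$ and $M_2+M_2^{T}$ requires careful bookkeeping with the $Q^{S}$ terms and the Penrose equations for $A_2$ and $A_4$. One could instead start from Theorem \ref{3.5-0}, turning (\ref{3.5-0-1}) into (\ref{2.23}) via $A_0A_0^{\dag}=A_2A_2^{\dag}$ and $A_0^{\dag}A_0=A_4^{\dag}A_4$ and rewriting (\ref{3.5-0-2}) with the expressions for $A_3$ and $A_5$ from Theorem \ref{2.2}, but the route through Theorem \ref{3.4} keeps the algebra lighter.
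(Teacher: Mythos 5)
Your proof is correct, but it follows a genuinely different route from the paper's. The paper derives Theorem \ref{3.5} from Theorem \ref{3.5-0}: it translates the two conditions $A_0A_0^{\dag}=A_0^{\dag}A_0$ and $\left(I_n-A_0^{\dag}A_0\right)A_1A_0^{\dag}=\left(A_0^{\dag}A_1\left(I_n-A_0^{\dag}A_0\right)\right)^{T}$ into factor form by substituting $A_1=A_2A_5+A_3A_4$, $A_0^{\dag}=A_4^{\dag}A_2^{\dag}$, $A_0A_0^{\dag}=A_2A_2^{\dag}$ and $A_0^{\dag}A_0=A_4^{\dag}A_4$; all of the real work (in particular the antisymmetry argument that splits the symmetrized commutation condition into the one-sided identity (\ref{3.5-0-2})) was already done inside Theorem \ref{3.5-0}, so what remains is pure substitution. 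You instead start from the factor-level EP criterion $\widehat{A_1}\widehat{A_1}^{\dag}=\widehat{A_2}^{\dag}\widehat{A_2}$ of Theorem \ref{3.4}, expand both sides with Lemma \ref{2.5}, identify the real parts with (\ref{2.23}), collapse the dual parts to $M_1+M_1^{T}$ and $M_2+M_2^{T}$, and then recover the asymmetric condition (\ref{2.24}) by multiplying by $I_n-P$ on the left and $P$ on the right, where $P=A_2A_2^{\dag}=A_4^{\dag}A_4$. I checked the dual-part simplifications and the projector-sandwich identities ($\left(I_n-P\right)M_1P=M_1$, $\left(I_n-P\right)M_1^{T}P=0$, $\left(I_n-P\right)M_2P=0$, $\left(I_n-P\right)M_2^{T}P=M_2^{T}$), and they all hold; you also correctly note that (\ref{2.23}) must be secured before $A_2A_2^{\dag}$ and $A_4^{\dag}A_4$ can be identified. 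Your route is self-contained modulo Theorem \ref{3.4} and Lemma \ref{2.5}, and your projector argument is arguably cleaner than the paper's orthogonal-block/antisymmetric-matrix argument, at the cost of redoing in the factored setting the symmetrization-splitting step that the paper had already packaged into Theorem \ref{3.5-0}.
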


\begin{proof}
Let the dual $r$-rank decomposition of $\widehat{A}$ exist,
then the DMPGI of $\widehat{A}$ exists.
Let $\widehat{A}=\widehat{A_1}\widehat{A_2}$
be a dual $r$-rank decomposition of $\widehat{A}$
where $\widehat{A_1}=A_2+\epsilon A_3$,
$A_i(i=2,3)\in \mathbb{R}_{n\times r}$,
$\widehat{A_2}=A_4+\epsilon A_5$
and
$A_i(i=4,5)\in \mathbb{R}_{r\times n}$.

$"\Rightarrow"$:
By applying  (\ref{20220216-2})
and the full rank decomposition of $A_0$ to  (\ref{3.5-0-1}),
we have (\ref{2.23}).

By applying  (\ref{20220216-2}) and
$A_1=A_2A_5+A_3A_4$,
we get
\begin{align}
 \label{20220216-4}
\left(I_n- A_4^{T}\left(A_4A_4^{T}\right)^{-1}A_4\right)A_2A_5A_0^\dag
&=
\left(I_n-A_2\left(A_2^{T}A_2\right)^{-1}A_2^{T}\right)A_2A_5A_0^\dag
=0,
\\
\nonumber
\left(I_n- A_4^{T}\left(A_4A_4^{T}\right)^{-1}A_4\right)A_3A_4A_0^\dag
&=
\left(I_n- A_4^{T}\left(A_4A_4^{T}\right)^{-1}A_4\right)A_3A_4A_4^{\dag} A_2^{\dag}
 \\
 \label{20220216-5}
&=
\left(I_n- A_4^{T}\left(A_4A_4^{T}\right)^{-1}A_4\right)A_3 A_2^{\dag} ,
\end{align}
and
\begin{align}
\nonumber
\left(I_n-A_0^\dag A_0\right)A_1A_0^\dag
&=
\left(I_n- A_4^{T}\left(A_4A_4^{T}\right)^{-1}A_4\right)A_1A_0^\dag
\\
\nonumber
&=
\left(I_n- A_4^{T}\left(A_4A_4^{T}\right)^{-1}A_4\right)\left(A_2A_5+A_3A_4\right)A_0^\dag
\\
 \label{20220216-1}
&=
\left(I_n- A_4^{T}\left(A_4A_4^{T}\right)^{-1}A_4\right)A_3 A_2^{\dag} .
\end{align}

In the same way, we have
\begin{align}
\label{20220216-6}
A_0^\dag A_1\left(I_n-A_0^\dag A_0\right)
=A_4^{\dag}A_5\left(I_n- A_4^{T}\left(A_4A_4^{T}\right)^{-1}A_4\right).
\end{align}
From (\ref{20220216-1}), (\ref{20220216-6}) and (\ref{3.5-0-2}),
it follows  that we get (\ref{2.24}).

$"\Leftarrow"$:
Conversely,
if the equation  (\ref{2.23}) holds,
by applying the full rank decomposition of $A_0$,
it is easy to check that  $A_0 A_0^\dag =A_0^\dag A_0$, that is (\ref{3.5-0-1}).
Furthermore, let   (\ref{2.23}) and (\ref{2.24}) hold simultaneously.
 Because $A_0$ is EP,
$\left(I_n- A_4^{T}\left(A_4A_4^{T}\right)^{-1}A_4\right)A_2A_5A_0^\dag=0$ and
$\left(I_n- A_4^{T}\left(A_4A_4^{T}\right)^{-1}A_4\right)A_3A_4A_0^\dag=\left(I_n- A_4^{T}\left(A_4A_4^{T}\right)^{-1}A_4\right)A_3 A_2^{\dag}$.
Therefore,
we get  that
$$
\left(I_n- A_4^{T}\left(A_4A_4^{T}\right)^{-1}A_4\right)A_3 A_2^{\dag}
=\left(I_n-A_0^\dag A_0\right)A_1A_0^\dag.$$
In   the same way,
we have $A_4^{\dag}A_5\left(I_n- A_4^{T}\left(A_4A_4^{T}\right)^{-1}A_4\right)
=A_0^\dag A_1\left(I_n-A_0^\dag A_0\right)$.
It follows from  applying both (\ref{2.24}) and Theorem \ref{3.5-0} that    $\widehat{A}$ is a dual EP matrix.
\end{proof}

\subsection{Dual Penrose Equations}
This subsection
considers   dual Penrose equations
by applying dual $r$-rank decomposition.

\begin{theorem}
Let
 $\widehat{A}\in\mathbb{D}_{m\times n}$,
 $\widehat{A}= A_0+\epsilon A_1$
 and
${\rm rk}(A_0)=r$.
If the dual $r$-rank decomposition of $\widehat{A}$ exists
and $\widehat{A_1}\widehat{A_2}$ is a dual $r$-rank decomposition of $\widehat{A}$,
then
$$(a) \
\widehat{A_2}^{(i)}\widehat{A_1}^{(1)}\in \widehat{A}{\{i\}}(i=1,2,4),
\
(b) \
\widehat{A_2}^{\{1\}}\widehat{A_1}^{(j)}\in\widehat{A}{\{j\}}(i=1,2,3).$$
\end{theorem}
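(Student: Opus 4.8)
The plan is to imitate the proof of the classical identities (\ref{20220216-3}) for $\{i,\dots,k\}$-inverses built from a full rank factorization, the only genuinely new point being that a one-sided full rank factor of a dual matrix still admits the appropriate one-sided cancellation. First I would record two auxiliary facts. If $\widehat{A_1}=A_2+\epsilon A_3$ is an $r$-column full rank dual matrix, then by Lemma \ref{2.5} the dual matrix $\widehat{A_1}^{T}\widehat{A_1}$ is invertible and $\widehat{A_1}^{\dag}\widehat{A_1}=\left(\widehat{A_1}^{T}\widehat{A_1}\right)^{-1}\widehat{A_1}^{T}\widehat{A_1}=I_r$; consequently $\widehat{A_1}$ is left cancellable (if $\widehat{A_1}\widehat{B}=0$ then $\widehat{B}=\widehat{A_1}^{\dag}\widehat{A_1}\widehat{B}=0$), so applying left cancellation to $\widehat{A_1}\left(\widehat{A_1}^{(1)}\widehat{A_1}-I_r\right)=0$ gives $\widehat{A_1}^{(1)}\widehat{A_1}=I_r$ for every $\widehat{A_1}^{(1)}\in\widehat{A_1}\{1\}$. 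Dually, if $\widehat{A_2}=A_4+\epsilon A_5$ is an $r$-row full rank dual matrix, then $\widehat{A_2}\widehat{A_2}^{\dag}=I_r$, $\widehat{A_2}$ is right cancellable, and $\widehat{A_2}\widehat{A_2}^{(1)}=I_r$ for every $\widehat{A_2}^{(1)}\in\widehat{A_2}\{1\}$. The relevant $\{i\}$-sets are nonempty, since the DMPGIs of $\widehat{A_1}$, $\widehat{A_2}$ and $\widehat{A}$ all exist (Lemma \ref{2.5} and Theorem \ref{2.4}), so the products in the statement are well defined.

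For part (a), fix $i\in\{1,2,4\}$, write $\widehat{A}=\widehat{A_1}\widehat{A_2}$, and set $\widehat{X}=\widehat{A_2}^{(i)}\widehat{A_1}^{(1)}$. Using $\widehat{A_1}^{(1)}\widehat{A_1}=I_r$ one obtains $\widehat{A}\widehat{X}\widehat{A}=\widehat{A_1}\widehat{A_2}\widehat{A_2}^{(i)}\widehat{A_2}$, $\widehat{X}\widehat{A}\widehat{X}=\widehat{A_2}^{(i)}\widehat{A_2}\widehat{A_2}^{(i)}\widehat{A_1}^{(1)}$ and $\widehat{X}\widehat{A}=\widehat{A_2}^{(i)}\widehat{A_2}$. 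For $i=1$ the equation $\widehat{A_2}\widehat{A_2}^{(1)}\widehat{A_2}=\widehat{A_2}$ yields $\widehat{A}\widehat{X}\widehat{A}=\widehat{A}$; for $i=2$ the equation $\widehat{A_2}^{(2)}\widehat{A_2}\widehat{A_2}^{(2)}=\widehat{A_2}^{(2)}$ yields $\widehat{X}\widehat{A}\widehat{X}=\widehat{X}$; for $i=4$ symmetry of $\widehat{A_2}^{(4)}\widehat{A_2}$ yields $\left(\widehat{X}\widehat{A}\right)^{T}=\widehat{X}\widehat{A}$. Hence $\widehat{X}\in\widehat{A}\{i\}$. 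Part (b) is the mirror image: fix $j\in\{1,2,3\}$ and set $\widehat{X}=\widehat{A_2}^{(1)}\widehat{A_1}^{(j)}$; using $\widehat{A_2}\widehat{A_2}^{(1)}=I_r$ one gets $\widehat{A}\widehat{X}\widehat{A}=\widehat{A_1}\widehat{A_1}^{(j)}\widehat{A_1}\widehat{A_2}$, $\widehat{X}\widehat{A}\widehat{X}=\widehat{A_2}^{(1)}\widehat{A_1}^{(j)}\widehat{A_1}\widehat{A_1}^{(j)}$ and $\widehat{A}\widehat{X}=\widehat{A_1}\widehat{A_1}^{(j)}$, and the cases $j=1,2,3$ follow from the $\{1\}$-, $\{2\}$- and $\{3\}$-equations for $\widehat{A_1}$ exactly as in part (a).

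The arithmetic above is routine, so the one step that needs real care is the first paragraph: one must justify one-sided cancellation for dual matrices. I would base it on the honest one-sided inverse produced by Lemma \ref{2.5} ($\widehat{A_1}^{\dag}\widehat{A_1}=I_r$ and $\widehat{A_2}\widehat{A_2}^{\dag}=I_r$), which is exactly where the hypotheses that $A_2$ has full column rank and $A_4$ has full row rank enter; alternatively, the same cancellation can be checked by splitting a dual equation into its real and $\epsilon$-parts and invoking cancellability of $A_2$ and $A_4$ over $\mathbb{R}$. The resulting statement is the dual analogue of the real representation (\ref{20220216-3}).
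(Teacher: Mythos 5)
Your proposal is correct and follows essentially the same route as the paper: both arguments reduce each Penrose equation to the identities $\widehat{A_1}^{(1)}\widehat{A_1}=I_r$ and $\widehat{A_2}\widehat{A_2}^{(1)}=I_r$ and then verify the three cases of (a) and (b) by direct substitution. The only difference is that you actually justify these one-sided identities via the cancellation supplied by $\widehat{A_1}^{\dag}\widehat{A_1}=I_r$ and $\widehat{A_2}\widehat{A_2}^{\dag}=I_r$ from Lemma \ref{2.5}, whereas the paper asserts them without proof, so your write-up is if anything slightly more complete.
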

\begin{proof}
(a). When $i=1$,
 both
$\widehat{A_1}\widehat{A_1}^{(1)}$
 and
 $\widehat{A_2}^{(1)}\widehat{A_2}$ are dual idempotent matrices,
then $\widehat{A_1}^{(1)}\widehat{A_1}=I_r$,
and
 $\widehat{A_2}\widehat{A_2}^{(1)}=I_r$,
we get
$$\widehat{A_1}\widehat{A_2}\widehat{A_2}^{(1)}
\widehat{A_1}^{(1)}\widehat{A_1}\widehat{A_2}
=
\widehat{A_1}\widehat{A_2},$$
that is,
 $\widehat{A_2}^{(1)}\widehat{A_1}^{(1)}\in \widehat{A}{\{1\}}$.

When $i=2$,
 $\widehat{A_1}\widehat{A_1}^{(1)}$ is a dual idempotent matrix,
then $\widehat{A_1}^{(1)}\widehat{A_1}=I_r$.
Since
$\widehat{A_2}^{(2)}\widehat{A_2}
\widehat{A_2}^{(2)}=\widehat{A_2}^{(2)}$, 
we get
$$\widehat{A_2}^{(2)}\widehat{A_1}^{(1)}
\widehat{A_1}\widehat{A_2}\widehat{A_2}^{(2)}\widehat{A_1}^{(1)}
=
\widehat{A_2}^{(2)}\widehat{A_1}^{(1)},$$
that is,
 $\widehat{A_2}^{(2)}\widehat{A_1}^{(1)}\in \widehat{A}{\{2\}}$.

When $i=4$,
$\widehat{A_1}\widehat{A_1}^{(1)}$ is a dual idempotent matrix,
then $\widehat{A_1}^{(1)}\widehat{A_1}=I_r$,
we get
$$\widehat{A_2}^{(4)}\widehat{A_1}^{(1)}\widehat{A_1}\widehat{A_2}
=\widehat{A_2}^{(4)}\widehat{A_2}
=\left(\widehat{A_2}^{(4)}\widehat{A_2}\right)^{T}
=\left(\widehat{A_2}^{(4)}\widehat{A_1}^{(1)}\widehat{A_1}
\widehat{A_2}\right)^{T},$$
that is,
$\widehat{A_2}^{(4)}\widehat{A_1}^{(1)}\in \widehat{A}{\{4\}}$.

(b) When $i=1$,
both
$\widehat{A_1}\widehat{A_1}^{(1)}$
and
$\widehat{A_2}^{(1)}\widehat{A_2}$ are dual idempotent matrices,
then
$\widehat{A_1}^{(1)}\widehat{A_1}
=I_r,\ \widehat{A_2}\widehat{A_2}^{(1)}=I_r$,
we get
 $$\widehat{A_1}\widehat{A_2}\widehat{A_2}^{(1)}
 \widehat{A_1}^{(1)}\widehat{A_1}\widehat{A_2}
 =\widehat{A_1}\widehat{A_2},$$
that is,
 $\widehat{A_2}^{(1)}\widehat{A_1}^{(1)}\in \widehat{A}{\{1\}}$.

When $i=2$,
$\widehat{A_2}^{(1)}\widehat{A_2}$ is a dual idempotent matrix,
then $\widehat{A_2}\widehat{A_2}^{(1)}=I_r$,
and
since
$\widehat{A_1}^{(2)}\widehat{A_1}\widehat{A_1}^{(2)}=\widehat{A_1}^{(2)}$,
we get
$$\widehat{A_2}^{(1)}\widehat{A_1}^{(2)}
\widehat{A_1}\widehat{A_2}\widehat{A_2}^{(1)}\widehat{A_1}^{(2)}
=
\widehat{A_2}^{(1)}\widehat{A_1}^{(2)},$$
that is,
$\widehat{A_2}^{\{1\}}\widehat{A_1}^{\{2\}}\in\widehat{A}{\{2\}}$.

When $i=3$,
$\widehat{A_2}^{(1)}\widehat{A_2}$ is a dual idempotent matrix,
then $\widehat{A_2}\widehat{A_2}^{(1)}=I_r$,
we get
$$\widehat{A_1}\widehat{A_2}\widehat{A_2}^{(1)}\widehat{A_1}^{(3)}
=\widehat{A_1}\widehat{A_1}^{(3)}
=\left(\widehat{A_1}\widehat{A_1}^{(3)}\right)^{T}
=\left(\widehat{A_1}\widehat{A_2}\widehat{A_2}^{(1)}
\widehat{A_1}^{(3)}\right)^{T},$$
that is,
$\widehat{A_2}^{\{1\}}\widehat{A_1}^{\{3\}}\in\widehat{A}{\{3\}}$.
\end{proof}



{\footnotesize
} 
\end{document}